\documentclass{article} 

\usepackage[T1]{fontenc}
\usepackage[utf8]{inputenc}
\usepackage{amsmath}
\usepackage{amsfonts,amssymb,latexsym,epsfig,amsthm, amscd}
\usepackage{color}
\usepackage{latexsym,epsfig}
\usepackage{hyperref}
\definecolor{ultramarine}{rgb}{0.15, 0.25, 0.70}
\hypersetup{colorlinks=true,citecolor=ultramarine,linkcolor=ultramarine,urlcolor=ultramarine} 

\usepackage[top=3cm,bottom=3cm,left=3.5cm,right=3.5cm]{geometry}

\title{Stochastic epidemic model with varying infectivity and waning immunity: the law of large numbers with unbounded infectivity}

\author{
  Rapha{\"e}l Forien\footnote{BioSP, INRAE, France.
    {raphael.forien@inrae.fr}, ORCID: {0000-0002-8901-4921}}
  \and 
  {\'E}tienne Pardoux\footnote{Aix--Marseille Universit{\'e}, CNRS, I2M, Marseille, France. 
  {etienne.pardoux@univ-amu.fr}, ORCID: {0000-0002-2586-4791}}}

\newcommand{\E}        {{ {\mathbb E}}}
\renewcommand{\P}      {{ {\mathbb P}}}

\newcommand{\R}        {{\mathbb R}}

\newcommand{\F}  {{\mathcal F}}

\newcommand\A  {\mathfrak{a}}

\newtheorem{theorem}{{Theorem}}
\newtheorem{proposition}[theorem]{{Proposition}}
\newtheorem{lemma}[theorem]{Lemma}

\newtheorem{remark}[theorem]{Remark}

\begin{document}

\maketitle

\begin{abstract}
	We revisit the large population limit of our epidemic model with infection age dependent infectivity and progressive 
	immunity waning, under the assumption that the supremum in $t$ of the random infectivity function has a finite expectation, while the previous proofs assumed that this supremum admits a deterministic upper bound.
	
	~~
	
	\noindent
	\textbf{Keywords: } Stochastic epidemic model, infection age dependent infectivity, waning immunity. \\
	\textbf{AMS: } 60F17, 60G55, 92D30.
\end{abstract}

\section{Introduction}

In a series of papers which appeared almost one hundred years ago, see \cite{KK1}, \cite{KK2}, \cite{KK3}, 
Kermack and McKendrick introduced first a SIR model with infection-age dependent infectivity, and then combined 
this with progressive loss of immunity.

In \cite{FPP1} and \cite{FPP2}, the authors have shown that the 1927 SIR model of \cite{KK1} is the law of large numbers limit,
as the size of the population tends to infinity, of 
a stochastic model where the infection-age dependent infectivity of infectious individuals is random, 
an independent copy of that random function being attached to each individual.

Later in \cite{FPPZ}, it has been shown that the 1932-33  model of \cite{KK2}, \cite{KK3} is a particular case of the law of large numbers limit
of a stochastic model where both the infection age dependent infectivity and the progressive waning of immunity are random, and i.i.d. among the
various individuals and successive infections.

More precisely, in the model introduced in \cite{FPPZ}, at each new infection, a new independent copy of a pair of random functions $ (\lambda, \gamma) $ is drawn, where $ \lambda $ is the infectivity function and $\gamma$ is the suceptibility function.
Letting $(\lambda_{k,i}, \gamma_{k,i})$ denote the pair drawn for the $i$-th infection of individual $k$, the infectivity of individual $k$ at time $t$ after their $i$-th infection and before their next infection is given by $\lambda_{k,i}(\A(t))$ and the susceptibility of the same individual $k$ is given by $\gamma_{k,i}(\A(t))$, where $\A(t)$ denotes the time elapsed at time $t$ since their $i$-th infection.
The main result of \cite{FPPZ} is that, as $N\to\infty$, the pair of processes given by the renormalized total force of infection in the population, and the average susceptibility, converges in probability, locally uniformly in $t$, to the unique solution of a system of two integral equations, which can be seen as a generalization of the Kermack-McKendrick system of equations.

In all those works, the random infectivity function was assumed to admit an almost sure deterministic upper bound. In the present note, we show that the main results
of the above works remain true if we assume only that the supremum in $t$ of the random infectivity admits a finite expectation. We treat both the case of the SIR model, and the case a progressive loss of immunity.

{\bf Notations} $\mathbf{D} = D(\R_+;\R)$ stands for the Skorohod space of c\`adl`ag real-valued functions, endowed with the usual topology and $\mathbf{D}^2 = D(\R_+;\R^2)$ is the space of c\`adl`ag functions taking values in $ \R^2 $.

\section{Statement of the result} \label{sec:statement}

We consider a population of size $N$. To each individual $1\le k\le N$, we associate
its initial status which is summarized by the i.i.d. $\mathbf{D}^2$-valued random variables $\lbrace (\lambda_{k,0},\gamma_{k,0}), 1 \leq k \leq N \rbrace$, where $ t \mapsto \lambda_{k,0}(t) $ (resp. $ t \mapsto \gamma_{k,0}(t) $) represents the infectivity (resp. susceptibility) of individual $ k $ up to their next infection.
A classical example of initial conditions, compatible with our model, is the following. Let $\bar{I}(0)$ (resp. $\bar{S}(0)$) denote the proportion of
initially infected (resp. of initially susceptible) individuals. We assume that $\bar{I}(0)+\bar{S}(0)=1$. 
Then, given $\{X_k,\ 1\le k\le N\}$ a family of i.i.d. Bernoulli random variables with parameter  $\bar{I}(0)$ and
$\{(\lambda^0_k,\gamma^0_k), \ 1\le k\le N\}$ i.i.d. $\mathbf{D}^2$-valued random functions, we set
\begin{align}\label{init}
\lambda_{k,0}(t)=X_k\lambda^0_k(t),\quad \gamma_{k,0}(t)=1-X_k+X_k\gamma^0_k(t) \quad t \geq 0\,.
\end{align}
Also let $\{(\lambda_{k,i}, \gamma_{k,i}), 1\le k\le N, i\ge1\}$ be a family of i.i.d. $ \mathbf{D}^2 $-valued random variables, independent from the previous family.
The pair $ (\lambda_{i,k}, \gamma_{i,k}) $ then gives the infectivity and susceptibility of individual $ k $ after their $ i $-th infection and up to their next infection.
Let $ (\lambda_{0}, \gamma_0) $ (resp. $ (\lambda, \gamma) $) denote a generic $ \mathbf{D}^2 $-valued random variable with the same distribution as $ (\lambda_{k,0}, \gamma_{k,0}) $ (resp. as $ (\lambda_{k,1}, \gamma_{k,1}) $).
Assumptions on the law of $ (\lambda_{k,i}, \gamma_{k,i}) $ are thus stated below as assumptions on $ (\lambda_0, \gamma_0) $ and $ (\lambda, \gamma) $.

The number of (re)infections of individual $k$ on the time interval $(0,t]$ is denoted by $B^N_k(t)$ and will be defined as the solution to a stochastic equation below.
Defining the age of (the most recent) infection of individual $ k $,
\[ \A^N_k(t)=t-(\sup\{s\in(0,t], B^N_k(s)=B^N_k(s^-)+1\}\vee0)\,,\]
the infectivity and susceptibility of individual $k$ at time $t$ is given by the pair
\begin{equation*}
	\Big( \lambda_{k,B^N_k(t)}(\A^N_k(t)), \, \gamma_{k,B^N_k(t)}(\A^N_k(t)) \Big).
\end{equation*}

We assume that $0\le \lambda_0(t), \lambda(t)$ and $0\le \gamma_0(t),\gamma(t)\le 1$ almost surely, and that
\begin{equation} \label{disjoint_support}
    \inf\{t \geq 0,\ \gamma(t)>0\}\ge\sup\{t \geq 0,\ \lambda(t)>0\}, \quad \text{ almost surely,}
\end{equation}
and we set $\lambda(t)=\lambda_0(t)=\gamma(t)=\gamma_0(t)=0$ for $t<0$.
Setting $\lambda^\ast:=\sup_{t\ge0}\lambda(t)$ and $\lambda_{0}^\ast:=\sup_{t\ge0}\lambda_0(t)$, we assume moreover that
\begin{equation} \label{assumption_1st_moment}
	\E[\lambda^\ast]<\infty \quad\text{and}\quad \E[\lambda^\ast_0]<\infty.
\end{equation}

The renormalized total force of infection and the renormalized total susceptibility are given by
\begin{equation}\label{FNSN}
\overline{\mathfrak{F}}^N(t)=\frac{1}{N}\sum_{k=1}^N\lambda_{k,B^N_k(t)}(\A^N_k(t)),\quad\text{ and } \quad
 \overline{\mathfrak{S}}^N(t)=\frac{1}{N}\sum_{k=1}^N\gamma_{k,B^N_k(t)}(\A^N_k(t))\,.
 \end{equation}
Given $\{Q_k,\ 1\le k\le N\}$ i.i.d. standard Poisson random measures on $\R_+^2$, $(B^N_k(\cdot), 1 \leq k \leq N)$ is defined as the solution of the system of stochastic equations
\begin{equation*}
    \left\lbrace
    \begin{aligned}
        B^N_k(t)&=\int_0^t\int_0^\infty{\bf1}_{u\le\Upsilon^N_k(s^-)}Q_k(ds,du),\\
        \Upsilon^N_k(s)&=\gamma_{k,B^N_k(s)}(\A^N_k(s))\overline{\mathfrak{F}}^N(s)\,,
    \end{aligned}
    \right. \quad 1 \leq k \leq N.
\end{equation*}
The above process almost surely does not explode in finite time, since consecutive jumps (i.e. infections) are at least separated by an interval during which a new copy of $\gamma_{k,i}$ is zero, whose length is non zero with positive probability.

We will use below the following notations: 
\begin{align*}
	\overline{\lambda}_0(t)=\E[\lambda_0(t)], && \overline{\lambda}(t)=\E[\lambda(t)], && \overline{\lambda}^\ast = \E[\lambda^\ast] \vee \E[\lambda_0^\ast],
\end{align*}
where the latter is an upper bound of both $\sup_{t\ge0}\overline{\lambda}_0(t)$ and $\sup_{t\ge0}\overline{\lambda}(t)$.

 Our main result is the following law of large numbers.

\begin{theorem}\label{LLNVIWI} Under the above assumptions,
as $N\to\infty$, $(\overline{\mathfrak{S}}^N(\cdot),\overline{{\mathfrak{F}}}^N(\cdot))$ converges in probability, locally uniformly, to $ (\overline{\mathfrak{S}}(\cdot), \overline{\mathfrak{F}}(\cdot)) $, defined as the unique solution of 
\begin{equation}\label{eq:limit}
    \left\lbrace
\begin{aligned}
\overline{\mathfrak{S}}(t)&=\E\left[\gamma_0(t)\exp\left(-\int_0^t\gamma_0(r)\overline{\mathfrak{F}}(r)dr\right)\right]
\\
&\quad+\int_0^t \E\left[\gamma(t-s)\exp\left(-\int_s^t\gamma(r-s)\overline{\mathfrak{F}}(r)dr\right)\right]
\overline{\mathfrak{S}}(s) \overline{\mathfrak{F}}(s)ds ,
\\
\overline{\mathfrak{F}}(t)&=\overline{\lambda}_0(t)+\int_0^t\overline{\lambda}(t-s)\overline{\mathfrak{S}}(s) \overline{\mathfrak{F}}(s)ds\,.
\end{aligned}
    \right.
\end{equation}
\end{theorem}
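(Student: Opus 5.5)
We would prove the theorem by truncation: reduce to the bounded-infectivity result of \cite{FPPZ}, and control the error with the first-moment assumption \eqref{assumption_1st_moment}. For $M>0$ set $\lambda^M_{k,i}(t)=\lambda_{k,i}(t)\wedge M$. We build the truncated system $(B^{N,M}_k)$ from the same Poisson measures $Q_k$ and the same $\gamma_{k,i}$, with $\lambda_{k,i}$ replaced by $\lambda^M_{k,i}$. Condition \eqref{disjoint_support} is preserved by this replacement. Let $(\overline{\mathfrak{S}}^{N,M},\overline{\mathfrak{F}}^{N,M})$ be the resulting aggregates and $(\overline{\mathfrak{S}}^M,\overline{\mathfrak{F}}^M)$ the solution of \eqref{eq:limit} with $\overline{\lambda}_0,\overline{\lambda}$ replaced by $\E[\lambda_0\wedge M]$ and $\E[\lambda\wedge M]$. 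The main result of \cite{FPPZ} then gives $(\overline{\mathfrak{S}}^{N,M},\overline{\mathfrak{F}}^{N,M})\to(\overline{\mathfrak{S}}^M,\overline{\mathfrak{F}}^M)$ in probability, locally uniformly, for each fixed $M$.

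There are three steps, in this order.

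\emph{Step 1: well-posedness and stability of the limit system.} We show that \eqref{eq:limit} has a unique solution when $\overline{\lambda}_0,\overline{\lambda}$ are merely bounded by $\overline{\lambda}^\ast$. Since $0\le\overline{\mathfrak{S}}\le1$, we have $\overline{\mathfrak{F}}(t)\le \overline{\lambda}^\ast e^{\overline{\lambda}^\ast t}$, so both functions are bounded on $[0,T]$. Uniqueness and existence then follow from a Picard/Gronwall argument, using $|e^{-a}-e^{-b}|\le|a-b|$ for $a,b\ge0$. The same Gronwall estimate gives a stability bound
\[
\sup_{t\le T}\big(|\overline{\mathfrak{S}}^M-\overline{\mathfrak{S}}|+|\overline{\mathfrak{F}}^M-\overline{\mathfrak{F}}|\big)\le C_T\big(\E[(\lambda_0^\ast-M)^+]+\E[(\lambda^\ast-M)^+]\big),
\]
and the right-hand side tends to $0$ as $M\to\infty$ by dominated convergence.

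\emph{Step 2: coupling of the $N$-particle systems.} This is the main obstacle. The two systems are driven by the same $Q_k$, and individual $k$'s trajectories differ only if there is an atom of $Q_k$ between $\Upsilon^N_k$ and $\Upsilon^{N,M}_k$ before time $T$. We work on the event $\Omega_N$ on which both forces of infection are bounded on $[0,T]$; this event has probability close to one. On $\Omega_N$, we let $D^N(t)$ be the proportion of individuals whose trajectories have decoupled by time $t$. For $k$ not decoupled at time $t$, the infectivities satisfy $|\lambda-\lambda^M|\le(\lambda^\ast_{k,i}-M)^+$ for the current copy $i$. Hence
\[
|\overline{\mathfrak{F}}^N(t)-\overline{\mathfrak{F}}^{N,M}(t)|\le \frac1N\sum_{k}\sum_{i\le B_k(T)}(\lambda^\ast_{k,i}-M)^+ + \frac1N\sum_{k\ \text{decoupled}}\big(\lambda^\ast_{k,B_k}+\lambda^{\ast}_{k,B^M_k}\big).
\]
We control this as follows.
\begin{itemize}
\item The number of infections per individual on $[0,T]$ is stochastically dominated by a Poisson variable with mean $CT$ when $\overline{\mathfrak{F}}$ is bounded. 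By independence of the i.i.d. copies, the first sum has expectation at most $C_T\,\E[(\lambda^\ast-M)^+]$ plus the same term with $\lambda_0^\ast$.
\item Decoupling of individual $k$ happens at rate $|\overline{\mathfrak{F}}^N-\overline{\mathfrak{F}}^{N,M}|$, so $\E[D^N(t)]\le\int_0^t\E|\overline{\mathfrak{F}}^N-\overline{\mathfrak{F}}^{N,M}|\,ds$.
\end{itemize}
The delicate term is the infectivity carried by decoupled individuals, since $\lambda^\ast$ is unbounded. We handle it by a uniform integrability bound: for any $K$, that term is at most $2K\,D^N(t)$ plus $\frac1N\sum_k\sum_{i\le B_k(T)+B^M_k(T)}\lambda^\ast_{k,i}\mathbf 1_{\lambda^\ast_{k,i}>K}$, and the second piece has small expectation for $K$ large. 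A Gronwall argument on $\E\big[\sup_{s\le t}|\overline{\mathfrak{F}}^N-\overline{\mathfrak{F}}^{N,M}|\wedge1\big]$ then yields
\[
\limsup_N \P\Big(\sup_{t\le T}|\overline{\mathfrak{F}}^N-\overline{\mathfrak{F}}^{N,M}|>\varepsilon\Big)\to0 \quad\text{as } M\to\infty.
\]
Since $0\le\gamma\le1$, the same bound holds for $\overline{\mathfrak{S}}$: $|\overline{\mathfrak{S}}^N-\overline{\mathfrak{S}}^{N,M}|\le D^N$.

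The a priori bound needed to define $\Omega_N$ comes from comparing with a branching-type dominating system. Bounding $\gamma\le1$ gives $\overline{\mathfrak{F}}^N(t)\le \frac1N\sum_k\sum_{i\le \tilde B_k(t)}\lambda^\ast_{k,i}$. A Gronwall argument in expectation, combined with the law of large numbers, then gives tightness of $\sup_{t\le T}\overline{\mathfrak{F}}^N$. If pathwise boundedness is awkward, a localization by a stopping time $\tau_N=\inf\{t:\overline{\mathfrak{F}}^N(t)>A\}$ would replace $\Omega_N$.

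\emph{Step 3: conclusion.} We split
\[
\overline{\mathfrak{F}}^N-\overline{\mathfrak{F}}=(\overline{\mathfrak{F}}^N-\overline{\mathfrak{F}}^{N,M})+(\overline{\mathfrak{F}}^{N,M}-\overline{\mathfrak{F}}^M)+(\overline{\mathfrak{F}}^M-\overline{\mathfrak{F}}),
\]
and do the same for $\overline{\mathfrak{S}}$. We first let $N\to\infty$, using \cite{FPPZ} for the middle term, and then let $M\to\infty$, using Steps 1 and 2 for the outer terms.
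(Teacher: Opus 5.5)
Your architecture (truncate each $\lambda_{k,i}$ at level $M$, apply \cite{FPPZ}, use stability of \eqref{eq:limit}, couple the two $N$-particle systems) is a legitimate alternative to the paper's, and Steps 1 and 3 and the a priori bound are fine. The gap is the closing argument of Step 2, which does not work under \eqref{assumption_1st_moment} alone. After localizing at $\tau_N$, your bounds give
\[
\E\big|\overline{\mathfrak{F}}^N(t)-\overline{\mathfrak{F}}^{N,M}(t)\big|\le\varepsilon_M+\varepsilon_K+2K\int_0^t\E\big|\overline{\mathfrak{F}}^N(s)-\overline{\mathfrak{F}}^{N,M}(s)\big|\,ds ,
\]
where $\varepsilon_M$ is of order $\E[(\lambda^\ast-M)^+]+\E[(\lambda_0^\ast-M)^+]$ and $\varepsilon_K$ is of order $\E[\lambda^\ast\mathbf{1}_{\lambda^\ast>K}]$. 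The tail piece is summed over \emph{all} individuals, so it is an additive error not proportional to $D^N$. Gronwall therefore returns only $(\varepsilon_M+\varepsilon_K)e^{2KT}$. Letting $M\to\infty$ leaves $\varepsilon_Ke^{2KT}$. A first moment allows $\varepsilon_K\approx 1/\log K$ (take $\P(\lambda^\ast>x)\approx 1/(x\log^2x)$), so $\inf_K\varepsilon_Ke^{2KT}>0$. The cutoff device would need exponential tails for $\lambda^\ast$, and the $\wedge 1$ changes nothing. Restricting the tail sum to decoupled individuals helps only if you can show its expectation is proportional to $\P(\sigma_k\le t)$, and that is exactly the missing idea.

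The missing ingredient is the mechanism behind the paper's estimate \eqref{estim13}: decoupled individuals need no cutoff, because their infectivity is carried only by fresh copies. At the decoupling time $\sigma_k$ one of the two systems jumps, so the shared current copy has $\gamma>0$. By \eqref{disjoint_support} its $\lambda$ then vanishes afterwards in both systems. Hence after $\sigma_k$, the infectivity of $k$ in either system is at most the sum of $\lambda^\ast$ over copies drawn at infections in $[\sigma_k,t]$. Each such copy is independent of the history up to its draw time, in particular of $\{\sigma_k\le t\}$. Under your localization, the number of such draws is at most one plus the number of atoms of $Q_k$ in $(\sigma_k,t]\times[0,A]$. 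This gives an expected contribution of $k$ of at most $2\overline{\lambda}^\ast(1+At)\,\P(\sigma_k\le t)$. That bound is linear in $\E[D^N(t)]$, with a constant free of $M$ and of any cutoff, so Gronwall closes with error $O(\varepsilon_M)$. To make this independence clean with two coupled systems, attach the marks $(\lambda,\gamma)$ to the atoms of $Q_k$ rather than to the infection index.

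Once repaired your route works, but the estimate it needs is the same one that drives the paper's Lemma~\ref{le:10}. With that estimate in hand, the paper bypasses \cite{FPPZ}, the stability estimate and the limit $M\to\infty$ altogether:
\begin{itemize}
\item it truncates the \emph{aggregate} force at a fixed $M>\sup_{[0,T]}\overline{\mathfrak{F}}$;
\item it couples directly with the i.i.d. processes $(B_k,\A_k)$ driven by the deterministic $\overline{\mathfrak{F}}$, via Proposition~\ref{prop:ident} and Rao's law of large numbers;
\item it removes the truncation because the limit stays below $M$.
\end{itemize}
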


In the situation described in \eqref{init} above, we have
\begin{align*}
\E\left[\gamma_0(t)\exp\left(-\int_0^t\gamma_0(r)\overline{\mathfrak{F}}(r)dr\right)\right]&=  \bar{S}(0)\exp\left(-\int_0^t\overline{\mathfrak{F}}(r)dr\right) \\ &\quad+\bar{I}(0)
\E\left[\gamma^0(t)\exp\left(-\int_0^t\gamma^0(r)\overline{\mathfrak{F}}(r)dr\right)\right] \\
\overline{\lambda}_0(t)&=\bar{I}(0)\overline{\lambda^0}(t)\,,
\end{align*}
where $\overline{\lambda^0}(t)=\E[\lambda^0_k(t)]$.

From this result, one can easily deduce (see \cite{FPPZ}) expressions for the proportions of infected and uninfected individuals.
For any $1\le k\le N$, $i\ge0$, let $\eta_0=\sup\{t>0,\ \lambda_0(t)>0\}$ and $\eta_{k,i}=\sup\{t>0,\ \lambda_{k,i}(t)>0\}$, $F_0^c(t)=\P(\eta_0>t)$, $F^c(t)=\P(\eta>t)$.
The proportions of infected and uninfected individuals are
\begin{align*}
    \overline{I}^N(t)=\frac{1}{N}\sum_{k=1}^N{\bf1}_{\{\A^N_k(t)<\eta_{k,B^N_k(t)}\}}, && \text{ and } && \overline{U}^N(t)=\frac{1}{N}\sum_{k=1}^N{\bf1}_{\{\A^N_k(t)\ge\eta_{k,B^N_k(t)}\}},
\end{align*}
respectively.
We have the following result.
\begin{proposition}
As $N\to\infty$, $(\overline{U}^N(t),\overline{I}^N(t))\to(\overline{U}(t),\overline{I}(t))$ in probability, locally uniformly in $t$, where
\begin{align*}
\overline{U}(t)&=\E\left[{\bf1}_{t\ge\eta_0}\exp\left(-\int_0^t\gamma_0(r)\overline{\mathfrak{F}}(r)dr\right)\right]\\
&\quad+\int_0^t\E\left[{\bf1}_{t-s\ge\eta}\exp\left(-\int_s^t\gamma(r-s)\overline{\mathfrak{F}}(r)dr\right)\right]
\overline{\mathfrak{S}}(s)\overline{\mathfrak{F}}(s)ds,\\
\overline{I}(t)&=\overline{I}(0)F_0^c(t)+\int_0^tF^c(t-s)\overline{\mathfrak{S}}(s)\overline{\mathfrak{F}}(s)ds\,.
\end{align*}
\end{proposition}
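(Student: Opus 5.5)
The plan is to deduce the proposition from Theorem~\ref{LLNVIWI} with a coupling argument: compare the $N$-particle system to a system of independent individuals driven by the deterministic force $\overline{\mathfrak{F}}$. Let $\overline{\mathfrak{F}}$ be the limit in Theorem~\ref{LLNVIWI}. For each $k$, define the auxiliary counting process $\tilde B_k$ by the same Poisson measure $Q_k$ and the same sequence $(\lambda_{k,i},\gamma_{k,i})_{i\ge0}$, with intensity $\gamma_{k,\tilde B_k(s)}(\tilde{\A}_k(s))\overline{\mathfrak{F}}(s)$ in place of $\Upsilon^N_k(s)$. The processes $(\tilde B_k,\tilde\A_k)$, $1\le k\le N$, are i.i.d. Define $\tilde I^N,\tilde U^N$ like $\overline{I}^N,\overline{U}^N$, but with $(\tilde B_k,\tilde\A_k)$.

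\textbf{Identifying the limits.} For the i.i.d. system I compute the expectations $\P(\tilde\A_1(t)<\eta_{1,\tilde B_1(t)})$ and $\P(\tilde\A_1(t)\ge\eta_{1,\tilde B_1(t)})$. I decompose according to the time $s$ of the last infection before $t$ (or none). The rate of infection at time $s$ has expectation $\E[\gamma_{1,\tilde B_1(s)}(\tilde\A_1(s))]\overline{\mathfrak{F}}(s)=\overline{\mathfrak{S}}(s)\overline{\mathfrak{F}}(s)$; this identity is exactly the first equation of \eqref{eq:limit}, obtained by the same decomposition, as in \cite{FPPZ}. Given an infection at $s$ with fresh pair $(\lambda,\gamma)$, there is no further infection on $(s,t]$ with conditional probability $\exp(-\int_s^t\gamma(r-s)\overline{\mathfrak{F}}(r)dr)$. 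Multiplying by ${\bf1}_{t-s\ge\eta}$ gives the integrand of $\overline U$. Multiplying by ${\bf1}_{t-s<\eta}$ instead, and using \eqref{disjoint_support}, the exponential equals $1$, since $\gamma(r-s)=0$ for $r-s<\eta$. This yields $F^c(t-s)$ and hence $\overline I$. The initial term is treated the same way; in the setting \eqref{init} it reduces to $\overline I(0)F_0^c(t)$. Pointwise convergence $\tilde I^N(t)\to\overline I(t)$, $\tilde U^N(t)\to\overline U(t)$ then follows from the LLN. To get local uniformity, I will write $\tilde I^N$ as a difference of two nondecreasing counting-type empirical processes: cumulative infections minus cumulative recoveries, with $\tilde U^N = 1 - \tilde I^N$. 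The limits are continuous, so a Glivenko--Cantelli/Dini-type argument applies.

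\textbf{Main obstacle: the coupling estimate.} I need to show $\E\big[\frac1N\sum_k\sup_{t\le T}{\bf1}_{\{B^N_k\neq\tilde B_k \text{ on }[0,t]\}}\big]\to0$. Both indicators coincide for $k$ whenever the paths agree on $[0,T]$, so this controls the difference between the true and auxiliary proportions. The two processes first differ at a point of $Q_k$ lying between the two intensities, and $\gamma\le1$. Hence
\[
\P\big(\exists t\le T: B^N_k(t)\neq\tilde B_k(t)\big)\le \E\int_0^T\big|\overline{\mathfrak{F}}^N(s)-\overline{\mathfrak{F}}(s)\big|\,ds .
\]
The right-hand side tends to $0$ by Theorem~\ref{LLNVIWI}, provided the convergence in probability is upgraded to $L^1$ on $[0,T]$. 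For this I need uniform integrability of $\sup_{s\le T}\overline{\mathfrak{F}}^N(s)$. This is the delicate point here, because $\lambda$ is unbounded. I will bound $\overline{\mathfrak{F}}^N(s)\le\frac1N\sum_k\big(\lambda^\ast_{k,0}+\sum_{i=1}^{B^N_k(T)}\lambda^\ast_{k,i}\big)$. Since the intensity has the multiplicative form $\gamma\overline{\mathfrak F}^N$, I will first truncate: fix $M$ and argue on the event that $\sup_{s\le T}\overline{\mathfrak{F}}^N(s)\le M$, whose probability tends to $1$ as $M\to\infty$ uniformly in $N$ by the convergence in Theorem~\ref{LLNVIWI}. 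On that event, $B^N_k(T)$ is dominated by a Poisson$(MT)$ variable independent of the $\lambda^\ast_{k,i}$, which gives integrability through $\E[\lambda^\ast]<\infty$. Finally, I let $N\to\infty$ and then $M\to\infty$.
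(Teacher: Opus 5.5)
Your overall strategy is the one the paper has in mind. You couple each individual with the i.i.d.\ process $(\tilde B_k,\tilde\A_k)$ driven by the deterministic $\overline{\mathfrak F}$, identify the means through the last-infection decomposition and \eqref{disjoint_support}, and then use a uniform law of large numbers. Your identification of $\overline U,\overline I$ is correct.

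For the uniformity step, note that $\overline I$ need not be continuous, since $F_0^c$ may jump. So rely on Glivenko--Cantelli, or directly on Rao's law of large numbers in $\mathbf D$ \cite{Rao} applied to the bounded c\`adl\`ag indicator processes, rather than on a Dini argument.

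\textbf{The gap.} It is in the step you yourself flag as delicate: your truncation does not yield uniform integrability. On $E_M=\{\sup_{s\le T}\overline{\mathfrak F}^N(s)\le M\}$ the force is trivially bounded by $M$, so the Poisson domination there adds nothing. Uniform integrability is precisely the claim that $\E\big[{\bf1}_{E_M^c}\int_0^T|\overline{\mathfrak F}^N(s)-\overline{\mathfrak F}(s)|ds\big]$ is small uniformly in $N$. Knowing only that $\P(E_M^c)$ is small says nothing about this expectation when $\overline{\mathfrak F}^N$ has no deterministic bound. So after letting $N\to\infty$ and then $M\to\infty$, exactly this term remains uncontrolled. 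Nor can you simply intersect your thinning bound with $E_M$: that event depends on the whole path on $[0,T]$ and is not predictable.

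\textbf{The fix.} Drop the $L^1$ upgrade and localize with a stopping time. Take $M>\sup_{[0,T]}\overline{\mathfrak F}$, let $\rho^N_M:=\inf\{s\ge0:\ \overline{\mathfrak F}^N(s)>M\}$, and let $\sigma_k$ be the first time $B^N_k\neq\tilde B_k$. Your thinning argument, applied with the predictable factor ${\bf1}_{\{s\le\sigma_k\wedge\rho^N_M\}}$, gives
\[
\P(\sigma_k\le T)\le\P(\rho^N_M\le T)+\E\Big[\int_0^{T\wedge\rho^N_M}\big|\overline{\mathfrak F}^N(s^-)-\overline{\mathfrak F}(s)\big|\,ds\Big].
\]
\begin{itemize}
\item The integral is now bounded by $T(M+\sup_{[0,T]}\overline{\mathfrak F})$ and tends to $0$ in probability by Theorem~\ref{LLNVIWI}, so bounded convergence applies.
\item By right-continuity, $\P(\rho^N_M\le T)\le\P(\sup_{s\le T}\overline{\mathfrak F}^N(s)\ge M)$, which tends to $0$ by the same theorem.
\item By exchangeability this gives $\E[\frac1N\sum_k{\bf1}_{\{\sigma_k\le T\}}]\to0$, which controls $\sup_{t\le T}|\overline I^N(t)-\tilde I^N(t)|$.
\end{itemize}
No moment bound on $\overline{\mathfrak F}^N$ is needed.

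This is the same idea as the paper's truncated system. Equivalently, you can combine Lemma~\ref{le:10} with the fact that $B^{N,M}_k=B^N_k$ on $[0,T]$ for all $k$, on an event whose probability tends to one. That is exactly how the paper avoids needing any bound on $\overline{\mathfrak F}^N$.
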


Theorem~\ref{LLNVIWI} generalises the main result of \cite{FPPZ}, in which it was assumed that there exists a constant $ \lambda^{**} > 0 $ such that $ \lambda^* \leq \lambda^{**} $ and $ \lambda_0^* \leq \lambda^{**} $, almost surely.
The proof presented here only requires \eqref{assumption_1st_moment}, i.e. that $ \lambda^* $ and $ \lambda^*_0 $ admit a finite first moment.
Before proving Theorem~\ref{LLNVIWI} in full generality, we treat the case of the SIR model in the next Section, in which individuals may only be infected once, as the proof is much simpler in this case.
The proof of the general result is presented in Section~\ref{sec:proof}.

\section{The particular case of the SIR model}

In this section,  we assume that individuals who have been infected at some time never lose their immunity.
Those who have never been infected are fully susceptible. This means that we are in the situation described by formulas \eqref{init} and that $\gamma^0\equiv \gamma \equiv0$. 
In particular
\begin{align*}
\begin{cases} \gamma_0(t)=1,&\text{for all $t\ge0$, with probability $\bar{S}(0)$,}\\
\gamma_0(t)=0,&\text{for all $t\ge0$, with probability $\bar{I}(0)$}\,,
\end{cases}
\end{align*}
where $\bar{S}(0)+\bar{I}(0)=1$. In the present case, the total susceptibility in the population is the number of susceptible individuals, and the renormalized total susceptibility
equals the proportion $\bar{S}^N(t)=N^{-1}S^N(t)$ of susceptible individuals, which converges to $\bar{S}(t)$ as $N\to\infty$, $\bar{S}^N$ (resp. $\bar{S}$) replacing here the quantity $\overline{\mathfrak{S}}^N$ (resp. $\overline{\mathfrak{S}}$). Moreover, the system \eqref{eq:limit} becomes here 
\begin{equation} \label{limit_SIR}
	\left\lbrace
	\begin{aligned}
		\bar{S}(t)&=\bar{S}(0)\exp\left(-\int_0^t\overline{\mathfrak{F}}(s)ds\right),\\
		\overline{\mathfrak{F}}(t)&=\bar{I}(0)\overline{\lambda^0}(t)+\int_0^t\overline{\lambda}(t-s)\bar{S}(s) \overline{\mathfrak{F}}(s)ds\,.
	\end{aligned}
	\right.
\end{equation}
We note that in fact $\bar{S}(t)$ solves the ODE
\[\bar{S}(t)=\bar{S}(0)-\int_0^t\bar{S}(s)\overline{\mathfrak{F}}(s)ds\,.\]
We can also rewrite the second equation as 
\begin{equation}\label{SIRlimeq}
 \overline{\mathfrak{F}}(t)=\bar{I}(0)\overline{\lambda^0}(t)+\bar{S}(0)\int_0^t\overline{\lambda}(t-s) \overline{\mathfrak{F}}(s)\exp\left(-\int_0^s\overline{\mathfrak{F}}(r)dr\right)ds\,.
 \end{equation}
Clearly, if $\overline{\mathfrak{F}}$ solves this last equation, then
\begin{align*}
\overline{\mathfrak{F}}(t)\le \bar{I}(0)\overline{\lambda}^\ast+\bar{S}(0)\overline{\lambda}^\ast\int_0^t\overline{\mathfrak{F}}(s)\exp\left(-\int_0^s\overline{\mathfrak{F}}(r)dr\right)ds
\le\overline{\lambda}^\ast\,.
\end{align*} 
With this a priori bound, it is easy to show that the last equation has at most one solution. Existence follows by the same Picard iteration argument as for ODEs.

The stochastic model can be described as follows. 
Recall that $ (X_k, k \geq 1) $ is an i.i.d. sequence of Bernoulli random variables with parameter $ \bar{I}(0) $, $ (\lambda_k^0, k \geq 1) $ is an i.i.d. sequence of $ \mathbf{D} $-valued random variables, and set $ \lambda_k = \lambda_{k,1} $, where we recall that $ (\lambda_{k,1}, k \geq 1) $ is another i.i.d. sequence of $ \mathbf{D} $-valued random variables.
Recall further that all the above sequences are mutually independent and that we consider $\{Q_k,\ k \geq 1\}$, an i.i.d. family of standard Poisson random measures on $\R^2_+$ (also independent of the previous random variables).
The individual $k$ is initially susceptible (resp. infected) if $X_k=0$ (resp. $X_k=1$). 
In addition, individual $k$ is susceptible at time $t$ if $X_k=0$ and $B^N_k(t)=0$; otherwise it is either infected or removed.
Then the above model reduces to
\begin{align*}
 B^N_k(t)&=(1-X_k)\int_0^t\int_0^\infty{\bf1}_{B^N_k(s^-)=0}{\bf1}_{u\le \overline{\mathfrak{F}}^N(s^-)}Q_k(ds,du), \quad 1 \leq k \leq N,\\  
 \overline{\mathfrak{F}}^N(t)&=N^{-1}\sum_{k=1}^N \left\lbrace X_k\lambda^0_k(t)+(1-X_k)\lambda_k(\A^N_k(t)) \right\rbrace,
 \end{align*}
where
\begin{align*}
	\tau^N_k:&=\inf\{t\ge0,\ B^N_k(t)=1\}, \quad \text{ and } \quad \A^N_k(t)=t-\tau^N_k.
\end{align*}
Furthermore,
\begin{align*}
 \bar{S}^N(t)&=N^{-1}\sum_{k=1}^N(1-X_k){\bf1}_{B^N_k(t)=0}\,.
 \end{align*}
We now define, for $k \geq 1$,
\begin{align} \label{Bk_SIR}
B_k(t)=(1-X_k)\int_0^t\int_0^\infty {\bf1}_{B_k(s^-)=0}{\bf1}_{u\le \overline{\mathfrak{F}}(s)}Q_k(ds,du),
\end{align}
where
\begin{equation*}
	\tau_k:=\inf\{t\ge0,\ B_k(t)=1\}, \quad \text{ and } \quad \A_k(t):=t-\tau_k.
\end{equation*}

The aim of this section is to show that $ (\overline{\mathfrak{F}}^N, \overline{S}^N)$ converges in probability (locally uniformly in time) to $ (\overline{\mathfrak{F}}, \overline{S}) $, given as the solution to \eqref{limit_SIR}.
We first show the following.
\begin{lemma}
For any $k \geq 1$ and $ t \geq 0 $,
\begin{equation*}
	\E[X_k\lambda^0_k(t)+(1-X_k)\lambda_k(\A_k(t))]=\overline{\mathfrak{F}}(t).
\end{equation*}
\end{lemma}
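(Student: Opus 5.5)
We compute the expectation directly, using independence of the ingredients together with the explicit law of $\tau_k$. By independence of $X_k$ and $\lambda^0_k$, the first term gives
\[
\E[X_k\lambda^0_k(t)]=\bar{I}(0)\overline{\lambda^0}(t).
\]
For the second term, note that on $\{X_k=0\}$, \eqref{Bk_SIR} makes $\tau_k$ the first point of $Q_k$ lying below the deterministic curve $u=\overline{\mathfrak{F}}(s)$. Since $Q_k$ is a standard Poisson random measure and $\overline{\mathfrak{F}}$ is deterministic, nonnegative and bounded by $\overline{\lambda}^\ast$, we get
\[
\P(\tau_k>s\mid X_k=0)=\exp\Big(-\int_0^s\overline{\mathfrak{F}}(r)dr\Big).
\]
Hence $\tau_k$ has density $\overline{\mathfrak{F}}(s)\exp(-\int_0^s\overline{\mathfrak{F}}(r)dr)$ on $\R_+$, possibly with an atom at $+\infty$. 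Crucially, $\tau_k$ is a function of $Q_k$ and $X_k$ only, so it is independent of $\lambda_k$.

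Next we condition on $\tau_k$ (and $X_k$). Recall the convention $\lambda(t)=0$ for $t<0$, and that when $\tau_k>t$ we have $\A_k(t)<0$, so $\lambda_k(\A_k(t))=0$. Then independence and Fubini (legitimate since everything is nonnegative) give
\[
\E[(1-X_k)\lambda_k(t-\tau_k)]=\bar{S}(0)\int_0^t\overline{\lambda}(t-s)\overline{\mathfrak{F}}(s)\exp\Big(-\int_0^s\overline{\mathfrak{F}}(r)dr\Big)ds.
\]
Summing the two terms yields exactly the right-hand side of \eqref{SIRlimeq}, which equals $\overline{\mathfrak{F}}(t)$ because $\overline{\mathfrak{F}}$ solves \eqref{SIRlimeq}. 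Equivalently, the integrand matches the second line of \eqref{limit_SIR}, since $\bar{S}(s)=\bar{S}(0)\exp(-\int_0^s\overline{\mathfrak{F}})$.

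The only mildly delicate points are the following:
\begin{itemize}
\item measurability of $(\omega,s)\mapsto\lambda_k(t-s)$, which holds because $\lambda_k$ is c\`adl\`ag;
\item the identification of the law of $\tau_k$, which follows from the Poisson property: the number of points of $Q_k$ in $\{(s,u):s\le t,\ u\le\overline{\mathfrak{F}}(s)\}$ is Poisson with mean $\int_0^t\overline{\mathfrak{F}}$.
\end{itemize}
No integrability problem arises, since $\E[\lambda^\ast]<\infty$ ensures $\overline{\lambda}\le\overline{\lambda}^\ast$ is finite.
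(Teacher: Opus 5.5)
Your proof is correct and is essentially the paper's argument. The paper writes $(1-X_k)\lambda_k(\A_k(t))$ as the Poisson integral $(1-X_k)\int_0^t\int_0^\infty {\bf1}_{B_k(s^-)=0}\lambda_k(t-s){\bf1}_{u\le\overline{\mathfrak F}(s)}Q_k(ds,du)$ and takes expectations; that is the same computation as yours, namely the density $\overline{\mathfrak F}(s)\exp(-\int_0^s\overline{\mathfrak F}(r)dr)$ of $\tau_k$ on $\{X_k=0\}$ combined with independence of $\lambda_k$ from $(X_k,Q_k)$, followed by comparison with \eqref{SIRlimeq} and $\E[X_k\lambda^0_k(t)]=\bar I(0)\overline{\lambda^0}(t)$.
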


\begin{proof}
	 For any $k \geq 1$, 
\[ (1-X_k)\lambda_k(\A_k(t)) = (1-X_k)\int_0^t\int_0^\infty {\bf1}_{B_k(s^-)=0}\lambda_k(t-s){\bf1}_{u\le \overline{\mathfrak{F}}(s)}Q_k(ds,du),\]
whose expectation equals
\begin{align*}
\bar{S}(0)\int_0^t\overline{\lambda}(t-s)\overline{\mathfrak{F}}(s)\exp\left(-\int_0^s\overline{\mathfrak{F}}(r)dr\right)ds\,.
\end{align*}
The result follows by comparing the last formula with \eqref{SIRlimeq}, and noting that $ \E[X_k \lambda^0_k(t)] = \bar{I}(0) \overline{\lambda^0}(t) $.
\end{proof}

Moreover $\{X_k\lambda^0_k(t)+(1-X_k)\lambda_k(\A_k(t)),\ k\ge1\}$ is an i.i.d. sequence of random elements of $D$. Consequently, also noting that
$\P(X_k=0,B_k(t)=0)=\bar{S}(0)\exp\left(-\int_0^t\overline{\mathfrak{F}}(s)ds\right)$, 
 it follows from the law of large numbers in $D$ (see 
\cite{Rao}) that, as $ N \to \infty $,
\begin{align*}
\left( t \mapsto N^{-1}\sum_{k=1}^N (1-X_k){\bf1}_{B_k(t)=0} \right)  &\to\bar{S}(\cdot),\\
\left( t \mapsto N^{-1}\sum_{k=1}^N\{X_k\lambda^0_k(t)+(1-X_k)\lambda_k(\A_k(t))\} \right)&\to \overline{\mathfrak{F}}(\cdot),
\end{align*}
in $ \mathbf{D} $ almost surely.
Hence the law of large numbers in the SIR case will follow if we can replace above $B_k(t)$ by $B^N_k(t)$ and $\A_k(t)$ by $\A^N_k(t)$. This is a consequence of the follwing Lemma.
\begin{lemma}
As $N\to\infty$, for any $T>0$,
    \begin{align*}
        \frac{1}{N}\E\left[\sum_{k=1}^N\sup_{0\le t\le T}|B^N_k(t)-B_k(t)|\right]\to0\,.
    \end{align*}
\end{lemma}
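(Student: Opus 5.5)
The proof uses the standard coupling through the common Poisson measures $Q_k$. Since both $B^N_k$ and $B_k$ take values in $\{0,1\}$ and are driven by the same $Q_k$, we have
\[
\sup_{0\le t\le T}|B^N_k(t)-B_k(t)| \le (1-X_k)\int_0^T\int_0^\infty \bigl|{\bf1}_{B^N_k(s^-)=0}{\bf1}_{u\le\overline{\mathfrak{F}}^N(s^-)}-{\bf1}_{B_k(s^-)=0}{\bf1}_{u\le\overline{\mathfrak{F}}(s)}\bigr|\,Q_k(ds,du).
\]
Taking expectations and compensating gives
\[
\E\Bigl[\sup_{t\le T}|B^N_k(t)-B_k(t)|\Bigr]\le \E\int_0^T\Bigl(|\overline{\mathfrak{F}}^N(s)-\overline{\mathfrak{F}}(s)| + \overline{\mathfrak{F}}(s)\,|B^N_k(s)-B_k(s)|\Bigr)ds .
\]
The second term uses $|{\bf1}_{B^N_k=0}-{\bf1}_{B_k=0}|=|B^N_k-B_k|$ and $\min(a,b)\le b$. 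Averaging over $k$ and using $\overline{\mathfrak{F}}\le\overline{\lambda}^\ast$, with $\Delta^N(t):=N^{-1}\sum_k\E\sup_{s\le t}|B^N_k(s)-B_k(s)|$, we get
\[
\Delta^N(t)\le \int_0^t\E|\overline{\mathfrak{F}}^N(s)-\overline{\mathfrak{F}}(s)|ds+\overline{\lambda}^\ast\int_0^t\Delta^N(s)ds.
\]

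Next we bound $\E|\overline{\mathfrak{F}}^N(s)-\overline{\mathfrak{F}}(s)|$. Let $\widetilde{\mathfrak F}^N(s)=N^{-1}\sum_k\{X_k\lambda^0_k(s)+(1-X_k)\lambda_k(\A_k(s))\}$ be the i.i.d. empirical average. Then
\[
\E|\widetilde{\mathfrak F}^N(s)-\overline{\mathfrak{F}}(s)|=:\varepsilon_N(s)\to0
\]
for each $s$, by the $L^1$ law of large numbers, since the summands are bounded by $\lambda^{0\ast}_k+\lambda^\ast_k$, which are integrable. Moreover $\varepsilon_N(s)\le 2\overline{\lambda}^\ast$, so $\int_0^T\varepsilon_N\to0$ by dominated convergence. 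The remaining term is
\[
N^{-1}\sum_k(1-X_k)|\lambda_k(\A^N_k(s))-\lambda_k(\A_k(s))|.
\]
It vanishes unless $\tau^N_k\ne\tau_k$ with one of them $\le s$, and it is bounded by $\lambda_k^\ast{\bf1}_{\{\sup_{r\le s}|B^N_k(r)-B_k(r)|=1\}}$.

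The main obstacle is this step. $\lambda^\ast_k$ is not bounded and is not independent of the event $\{B^N_k\ne B_k\}$, because $\lambda_k$ feeds back into $\overline{\mathfrak{F}}^N$. I would truncate at a level $M$:
\[
\lambda_k^\ast{\bf1}_{E_k}\le M{\bf1}_{E_k}+\lambda_k^\ast{\bf1}_{\lambda_k^\ast>M}.
\]
The expectation of the average of the second term is $\E[\lambda^\ast;\lambda^\ast>M]=:\delta(M)$, which tends to $0$ as $M\to\infty$ and does not depend on $N$. This yields
\[
\Delta^N(t)\le \int_0^T\varepsilon_N(s)ds + T\delta(M)+(M+\overline{\lambda}^\ast)\int_0^t\Delta^N(s)ds.
\]
Gronwall's lemma then gives
\[
\Delta^N(T)\le\Bigl(\int_0^T\varepsilon_N+T\delta(M)\Bigr)e^{(M+\overline{\lambda}^\ast)T}.
\]
Let $N\to\infty$ first, so that $\limsup_N\Delta^N(T)\le T\delta(M)e^{(M+\overline{\lambda}^\ast)T}$. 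This is the delicate point: the exponential in $M$ can defeat $\delta(M)$. To avoid it, I would not put $M$ inside the Gronwall constant. Instead I would use that $\Delta^N\le1$ always, so the indicator term is controlled by $\Delta^N$ itself with coefficient $M$ only on $\{\lambda^\ast_k\le M\}$.

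If that still fails, the fallback is to exploit the stopping structure. $\lambda_k$ acts only after $\tau^N_k$ or $\tau_k$, and the infection time of $k$ depends on $\lambda_k$ only through its $1/N$ contribution to $\overline{\mathfrak{F}}^N$. Replacing $\lambda_k$ by an independent copy in the dynamics of the other individuals changes things by $O(1/N)$, and this gives approximate independence of $\lambda_k^\ast$ and $E_k$, so that $\E[\lambda_k^\ast{\bf1}_{E_k}]\approx\overline{\lambda}^\ast\P(E_k)$. The recursion then closes with constant $\overline{\lambda}^\ast$ and Gronwall concludes.
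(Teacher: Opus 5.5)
\paragraph{Gap: the key step is not closed.} Your setup is fine up to the key step. The coupling through $Q_k$, the compensation, the $L^1$ law of large numbers for $\widetilde{\mathfrak F}^N$, and the bound of the remaining term by $\lambda_k^\ast{\bf1}_{E_k(s)}$ with $E_k(s)=\{\sup_{r\le s}|B^N_k(r)-B_k(r)|=1\}$ all work. But none of your three routes closes that step.
\begin{itemize}
\item The truncation route fails, as you suspect. It gives $\limsup_N\Delta^N(T)\le T\delta(M)e^{(M+\overline{\lambda}^\ast)T}$, which can be made small only under super-exponential tail assumptions on $\lambda^\ast$ (e.g.\ $\lambda^\ast$ bounded), not under mere integrability.
\item Your proposed repair (keeping the coefficient $M$ only on $\{\lambda_k^\ast\le M\}$ and using $\Delta^N\le 1$) changes nothing. $M$ still multiplies $\int_0^t\Delta^N$, so Gronwall still produces $e^{MT}$.
\item The fallback is a heuristic, not a proof. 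The claimed $O(1/N)$ effect of replacing $\lambda_k$ by an independent copy is unproved. It would also be hard to prove, since $\lambda_k/N$ is not uniformly small and its effect can cascade through the other individuals.
\end{itemize}

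\paragraph{Missing idea: the independence is exact.} Your assertion that $\lambda_k^\ast$ is not independent of $\{B^N_k\ne B_k\}$ is false. The reasons are structural:
\begin{itemize}
\item $\lambda_k$ enters $\overline{\mathfrak F}^N(t)$ only through $(1-X_k)\lambda_k(t-\tau^N_k)$, which vanishes for $t<\tau^N_k$.
\item The jump of $B^N_k$ at $\tau^N_k$ is driven by the left limit of $\overline{\mathfrak F}^N$ at that time.
\item $B^N_k$ is frozen after $\tau^N_k$.
\end{itemize}
So the whole particle system up to and including time $\tau^N_k$ coincides with the one obtained by setting $\lambda_k\equiv 0$. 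Hence $\tau^N_k$, and thus $B^N_k$, is a measurable function of $(X_j,\lambda^0_j,Q_j)_{j\le N}$ and $(\lambda_j)_{j\ne k}$ only. The same holds trivially for $B_k$, and this $\sigma$-field is independent of $\lambda_k$. Therefore
$\E[\lambda_k^\ast{\bf1}_{E_k(s)}]=\E[\lambda^\ast]\,\P(E_k(s))\le\overline{\lambda}^\ast\,\E[\sup_{r\le s}|B^N_k(r)-B_k(r)|]$.
Your inequality then becomes $\Delta^N(t)\le\int_0^T\varepsilon_N(s)ds+2\overline{\lambda}^\ast\int_0^t\Delta^N(s)ds$. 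Gronwall closes it with an $N$-independent constant, and no truncation of $\lambda^\ast$ is needed.

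This is exactly the paper's argument, via independence of $\lambda_j$ from $(B^N_j,B_j)$ plus exchangeability. The paper also drops your extra $\overline{\mathfrak F}\,|B^N_k-B_k|$ term, because the first discrepancy must come from an atom of $Q_k$ between $\overline{\mathfrak F}^N(s^-)$ and $\overline{\mathfrak F}(s)$; that term is harmless anyway. Your fallback points at the right structural fact, that $\lambda_k$ only acts after $\tau^N_k$. But since $B^N_k$ cannot change after $\tau^N_k$, that fact yields exact rather than approximate independence.
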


\begin{proof}
By \eqref{Bk_SIR},
\begin{align*}
\sup_{0\le s\le t}|B^N_k(s)-B_k(s)|&\le \int_0^t\int_{\overline{\mathfrak{F}}^N(s^-)\wedge\overline{\mathfrak{F}}(s)}^{\overline{\mathfrak{F}}^N(s^-)\vee\overline{\mathfrak{F}}(s)} Q_k(ds,du).
\end{align*}
Hence
\begin{align*}
\E\left[\sup_{0\le s \le t}|B^N_k(s)-B_k(s)|\right]
	&\le\E\left[\int_0^t|\overline{\mathfrak{F}}^N(s)-\overline{\mathfrak{F}}(s)|ds\right]\\
&\le N^{-1}\sum_{j=1}^N \E\left[\left|\int_0^t(1-X_j)[\lambda_j(s-\tau^N_j)-\lambda_j(s-\tau_j)]ds\right|\right]\\
&\quad+\E\Bigg[\int_0^t\Big| \overline{\mathfrak{F}}(s)-N^{-1}\sum_{j=1}^N\{X_j\lambda^0_j(s)+(1-X_j)\lambda_j(\A_j(s))\}\Big|ds\Bigg].
\end{align*}
It follows from the law of large numbers for i.i.d. sequences (where the convergences holds in $L^1(\Omega)$, see e.g. Exercise 24 page 59 in \cite{Kal})
that the last term on the right tends to $0$ locally uniformly in $t$, as $N\to\infty$. Moreover the first term on the right is bounded by
\begin{align*}
 N^{-1}\sum_{j=1}^N\E\left[\int_0^t|\lambda_j(s-\tau^N_j)-\lambda_j(s-\tau_j)|ds\right]
&\le N^{-1}\sum_{j=1}^N\E\left[\int_0^t\lambda_j^\ast{\bf1}_{\lbrace\tau^N_j\wedge s\not=\tau_j\wedge s\rbrace}ds \right]\\
&\le N^{-1}\sum_{j=1}^N\E\left[\int_0^t\lambda_j^\ast\sup_{0\le r\le s}|B^N_j(r)-B_j(r)|ds\right]\\
&\le \overline{\lambda}^\ast \int_0^t\E\left[\sup_{0\le r\le s}|B^N_k(r)-B_k(r)|\right]ds,
\end{align*}
where we have used the independence between the processes $\lambda_j$ and $(B^N_j,B_j)$, and the exchangeability of the sequence $(B^N_j,B_j)$. The result follows from the above estimates and Gronwall's lemma.

\end{proof}

This concludes the proof in the case of the SIR model, and we now return to the general model of Section~\ref{sec:statement}.

\section{Existence and uniqueness of the deterministic limiting system}
We rewrite \eqref{eq:limit} as the system
\begin{equation}\label{eq:limit2}
	\left\lbrace
\begin{aligned}
x(t)&=\E\left[\gamma_0(t)\exp\left(-\int_0^t\gamma_0(r)y(r)dr\right)\right]
\\
&\qquad+\int_0^t \E\left[\gamma(t-s)\exp\left(-\int_s^t\gamma(r-s)y(r) dr\right)\right]x(s) y(s)ds ,
\\
y(t)&=\overline {\lambda}_0(t)+\int_0^t\overline{\lambda}(t-s)y(s) x(s)ds\,.
\end{aligned}
\right.
\end{equation}
We have the following.

\begin{theorem}
The system of integral equations \eqref{eq:limit2} has a unique solution which belongs to $\mathbf{D}^2$, and 
for all $t\ge0$, $0\le x(t)\le 1$ and $0\le y(t)\le \overline{\lambda}^{\ast}_0 \exp(\overline{\lambda}^\ast t)$. Moreover, the solution of this system is continuous whenever $\overline{\lambda}_0(t)$ is continuous, and $\gamma_0(t)$ is continuous in probability.\footnote{In the statement of Theorem 3.1 of \cite{FPPZ}, the stated condition ``$\gamma_0$ has bounded variation and the map $t\mapsto(\E[\gamma_0(t)],\overline{\lambda}_0(t))$ is continuous'' does not imply the continuity. The last inequality in the proof of that theorem is wrong, and counterexamples are easy to find.}
\end{theorem}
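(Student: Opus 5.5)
I will prove existence by a Picard-type iteration and uniqueness by a Gronwall argument, both on a set of functions where the a priori bounds in the statement hold. Throughout, I use $\overline{\lambda}_0(t)\le \overline{\lambda}^\ast$ and $\overline{\lambda}(t)\le\overline{\lambda}^\ast$.

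\textbf{A priori bounds.} First I establish the bounds for any nonnegative solution. The bound $x\le 1$ should follow from a probabilistic interpretation rather than from the equation directly. Given a nonnegative locally bounded $y$, I construct a single-individual process. It starts with $(\lambda_0,\gamma_0)$ and is infected at rate $\gamma_{i}(\A(t))y(t)$ using a Poisson random measure, as in \eqref{Bk_SIR}. It draws a fresh independent $(\lambda_i,\gamma_i)$ at each infection, and this is well defined thanks to \eqref{disjoint_support}. Let $\tilde x(t)=\E[\gamma_{B(t)}(\A(t))]$. Conditioning on the last infection time before $t$ shows that $\tilde x$ solves the first equation of \eqref{eq:limit2} with $x$ replaced by $\tilde x$. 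Given $y$, that first equation is a linear Volterra equation in $x$ with bounded kernel, hence uniquely solvable, so $x=\tilde x\in[0,1]$. Then $0\le y(t)\le \overline{\lambda}^\ast_0+\overline{\lambda}^\ast\int_0^t y(s)ds$, and Gronwall gives $y(t)\le\overline{\lambda}^\ast_0 e^{\overline{\lambda}^\ast t}$.

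\textbf{Uniqueness.} Take two solutions $(x_1,y_1)$ and $(x_2,y_2)$ on $[0,T]$, both bounded by $1$ and $C_T:=\overline{\lambda}^\ast_0e^{\overline{\lambda}^\ast T}$. Using $|e^{-a}-e^{-b}|\le|a-b|$ for $a,b\ge0$ and $0\le\gamma\le1$, I get
\[
|x_1(t)-x_2(t)|\le C\int_0^t\big(|x_1-x_2|+|y_1-y_2|\big)(s)\,ds,
\]
with $C$ depending on $T$ and $C_T$. The same kind of estimate holds for $|y_1-y_2|$, since $|y_1x_1-y_2x_2|\le C_T|x_1-x_2|+|y_1-y_2|$. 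Gronwall then gives $x_1=x_2$ and $y_1=y_2$.

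\textbf{Existence.} I define the map $\Phi(x,y)$ as the right-hand side of \eqref{eq:limit2}, acting on pairs of measurable functions with $0\le x\le1$ and $0\le y\le C_T$ on $[0,T]$. The difficulty is that $\Phi$ need not preserve $x\le1$. To avoid this, I truncate: replace $x$ by $(x\wedge1)\vee0$ and $y$ by $(y\wedge C_T)\vee0$ inside the nonlinear terms. The truncated map is globally Lipschitz in the sup-norm on $[0,T]$, with the Volterra structure, so the estimate above shows that its iterates converge, because the $n$-th iterate difference is bounded by $(Ct)^n/n!$. The limit solves the truncated system. By the a priori argument it satisfies $x\in[0,1]$ and $y\le C_T$, so the truncations are inactive and it solves \eqref{eq:limit2}. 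Consistency across $T$ follows from uniqueness.

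\textbf{Regularity.} For the càdlàg property, I take right limits inside the expectations and use dominated convergence. Here $\gamma_0,\gamma\in\mathbf{D}$ are bounded by $1$, and $\overline{\lambda}_0,\overline{\lambda}$ are càdlàg by dominated convergence with dominating variables $\lambda_0^\ast$ and $\lambda^\ast$. The convolution terms are continuous in $t$, again by dominated convergence. Note that $\overline{\lambda}(t-s)$ is bounded, so $\int_0^t\overline{\lambda}(t-s)g(s)ds$ is continuous for bounded $g$, since $\overline{\lambda}$ has at most countably many jumps.

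For continuity under the extra hypotheses, continuity in probability of $\gamma_0$ together with boundedness makes $\E[\gamma_0(t)e^{-\int_0^t\gamma_0y}]$ continuous. Continuity of $\overline{\lambda}_0$ then makes $y$ continuous.

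I expect the main obstacle to be the bound $x\le1$, which relies on the probabilistic representation. If that step fails, I would instead try a direct comparison with the equation obtained by setting $\gamma\equiv1$.
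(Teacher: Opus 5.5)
Your proof is correct and shares the paper's skeleton: a priori bounds, Gronwall uniqueness, Picard iteration for a truncated system, and dominated convergence for regularity. You obtain the key bound $x\le1$ by a different route.

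\textbf{How the key bound is obtained.} The paper stays analytic. It sets
\[
z(t)=\E\Big[e^{-\int_0^t\gamma_0(r)y(r)dr}\Big]+\int_0^t\E\Big[e^{-\int_s^t\gamma(r-s)y(r)dr}\Big]x(s)y(s)\,ds,
\]
checks from the first equation that $z'\equiv0$, so $z\equiv1$, and concludes $x\le z=1$ because $\gamma_0,\gamma\le1$. You instead build the one-individual process driven by $y$. You identify $x$ with $\E[\gamma_{B(t)}(\A(t))]$ through uniqueness for the linear Volterra equation, which is the computation the paper only carries out later, in Proposition~\ref{prop:ident}. The two arguments express the same fact: for your process, $z(t)=\sum_{i\ge0}\P(B(t)=i)=1$. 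Your route explains where the bound comes from and gives $x\ge0$ for free. The paper's route needs no stochastic construction and plugs directly into its existence proof. There it truncates at the larger levels $2$ and $2\overline{\lambda}^\ast_0e^{\overline{\lambda}^\ast t}$ and removes the truncation with a stopping time and right-continuity. You truncate exactly at the bounds, which avoids the stopping time.

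\textbf{Two points to tighten.}

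First, ``by the a priori argument'' does not apply verbatim to your truncated system. Write $\tilde y=(y\wedge C_T)\vee0$ and $\pi(u)=(u\wedge1)\vee0$. Then the first equation reads $x(t)=a(t)+\int_0^tK(t,s)\,\pi(x(s))\,ds$, with $a(t)=\E[\gamma_0(t)e^{-\int_0^t\gamma_0\tilde y}]$ and $K(t,s)=\E[\gamma(t-s)e^{-\int_s^t\gamma(r-s)\tilde y(r)dr}]\,\tilde y(s)$. This equation is not linear.
\begin{itemize}
\item The repair is short. The $\hat x\in[0,1]$ built from $\tilde y$ also solves this equation, whose nonlinearity is Lipschitz, so $x=\hat x$.
\item Then $0\le\tilde y\le y$ gives $y(t)\le\overline{\lambda}^\ast_0+\overline{\lambda}^\ast\int_0^ty(s)\,ds$, hence $y\le C_T$.
\item Equivalently, the paper's $z$, formed with $\tilde y$ and $\pi(x)$, satisfies $z'=\tilde y\,(\pi(x)-x)\le0$.
\end{itemize}

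Second, you prove the bounds, and hence uniqueness, only for nonnegative solutions. The statement claims uniqueness in $\mathbf{D}^2$, and your construction needs $y\ge0$ before it can even be run. Either of two one-line fixes closes this, with $L$ a bound for $|x|,|y|$ on $[0,T]$.
\begin{itemize}
\item Prove nonnegativity directly: $(xy)^-\le L(x^-+y^-)$ yields $x^-(t)+y^-(t)\le C\int_0^t(x^-+y^-)(s)\,ds$, and Gronwall gives $x^-=y^-=0$.
\item Or run your Gronwall uniqueness estimate for arbitrary locally bounded solutions, using $|e^{-a}-e^{-b}|\le e^{LT}|a-b|$ for $a,b\ge-LT$. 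Then every $\mathbf{D}^2$ solution coincides with the nonnegative one you constructed.
\end{itemize}
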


 \begin{proof}
 	{\it Step 1 : A priori estimates} Let $(x,y)$ be a solution of the system \eqref{eq:limit2}. We first note that $x(0)\geq 0$ and $y(0)\geq 0$. It is easy to prove that there cannot be any $t>0$ such that $x(t)<0$ or $y(t)< 0$. Hence $x(t)\ge0$ and $y(t)\ge0$ for all $t\ge0$. From the first equation of  \eqref{eq:limit2}, the time derivative of 
\[z(t)\!\!:=\!\!\E\!\left[\!\exp\left(\!\!-\!\!\int_0^t\!\!\gamma_0(r)y(r)dr\!\!\right)\!\!\right]\!\!
+\!\!\int_0^t\!\! \E\!\left[\!\exp\!\!\left(\!\!-\!\!\int_s^t\!\!\gamma(r-s)y(r) dr\!\!\right)\!\!\right]x(s) y(s)ds\]
is zero, which implies that $z(t)=z(0)=1$ for all $t\ge0$. This, combined with the facts that $\gamma_0(t)\le 1$ and $\gamma(t-s)\le1$, implies that $x(t)\le1$.
Plugging this bound in the second equation of \eqref{eq:limit2} and using Gronwall's Lemma, we obtain that $y(t)\le \overline{\lambda}_0^\ast e^{\overline{\lambda}^\ast t}$ for all $t\ge0$.

{\it Step 2 : Uniqueness} Thanks to the above a priori estimates, uniqueness follows by standard arguments.

{\it Step 3 : Existence} Let $F(t;x,y)$ (resp. $G(t;x,y)$) denote the right-hand side of the first (resp. the second) equation
in \eqref{eq:limit2}. Standard computations show that the usual Picard iteration applied to the system
\[ x(t)=F(t;x,y)\wedge 2,\quad y(t)=G(t;x,y)\wedge (2 \overline{\lambda}_0^\ast \exp(\overline{\lambda}^\ast t)) \]
yields the existence of a solution to that system. Moreover, that solution verifies $x(0)=\E[\gamma_0(t)]<2$ and
$y(0)=\overline{\lambda}_0(0)<2\overline{\lambda}_0^\ast $. Since $(x,y)$ is right continuous, 
\[ \tau:=\inf\{t,\ F(t;x,y)\ge 2 \text { or } G(t;x,y)\ge 2 \overline{\lambda}_0^\ast \exp(\overline{\lambda}^\ast t)\}\ > \ 0\,.\]
Since $(x,y)$ solves \eqref{eq:limit2} on $[0,\tau)$, $dz(t)/dt=0$ on $[0,\tau)$, so if $\tau<\infty$, $z(\tau)=1$. 
Hence $x(t)\leq 1$ for all $t\in[0,\tau]$ and consequently $y(t) \leq \overline{\lambda}_0^\ast \exp(\overline{\lambda}^\ast t)$ for all $t\in[0,\tau]$, which contradicts the definition of $\tau$ (using the right-continuity of $(x,y)$ again), hence $\tau=+\infty$, and we have existence of a solution to \eqref{eq:limit2}.

{\it Step 4 : Continuity} Clearly, for the solution to be continuous, it suffices that the two first terms on the right hand sides in \eqref{eq:limit2} are continuous,
which is implied by our additional conditions.

 \end{proof}

\section{Proof of Theorem \ref{LLNVIWI}} \label{sec:proof}

From now on, $(\overline{\mathfrak{S}}(t),\overline{\mathfrak{F}}(t))$ denotes the unique solution of \eqref{eq:limit2}.
The key point of the proof is to show that, for each $ k $, on an event whose probability tends to 1 as $N\to\infty$, $B^N_k(t)$ and $\A^N_k(t)$ coincide with the processes $B_k(t)$ and $\A_k(t)$ defined as the solutions of the following equations, for $k \geq 1$,
\begin{equation*}
    \left\lbrace
    \begin{aligned}
        B_k(t)&=\int_0^t\int_0^\infty{\bf1}_{u\le\Upsilon_k(s^-)}Q_k(ds,du),\\
        \Upsilon_k(s)&=\gamma_{k,B_k(s)}(\A_k(s))\overline{\mathfrak{F}}(s),\\
        \A_k(t)&= t-(\sup\{s\in(0,t], B_k(s)=B_k(s^-)+1\}\vee0)\,.
    \end{aligned}
    \right.
\end{equation*}
We note that, for each $ k \geq 1 $, $(B_k(\cdot),\A_k(\cdot))$ is measurable with respect to the sigma-field generated by $ \lbrace Q_k, (\gamma_{k,i}, i \geq 1) \rbrace $, and thus $ \lbrace (B_k(\cdot), \A_k(\cdot)), k \geq 1 \rbrace $ is an i.i.d. sequence.
Their common law is that of the solution of the following equation
\begin{equation*}
    \left\lbrace
    \begin{aligned}
        B(t)&=\int_0^t\int_0^\infty{\bf1}_{u\le\Upsilon(s^-)}Q(ds,du),\\
        \Upsilon(s)&=\gamma_{B(s)}(\A(s))\overline{\mathfrak{F}}(s),\\
        \A(t)&= t-(\sup\{s\in(0,t], B(s)=B(s^-)+1\}\vee0)\,,
    \end{aligned}
    \right.
\end{equation*}
where $Q$ is a standard Poisson random measure on $\R_+^2$ and $\lbrace \gamma_i, i \geq 0 \rbrace$ is a sequence of random variables distributed as $\lbrace \gamma_{1,i}, i \geq 0 \rbrace$, independent of $Q$.
We further note that existence and uniqueness of the solution to that equation is elementary.

Let us first prove two fundamental  identities.
\begin{proposition}\label{prop:ident}
For all $t\ge0$,
\[(\E[\gamma_{B(t)}(\A(t))],\E[\lambda_{B(t)}(\A(t))]) = (\overline{\mathfrak{S}}(t),\overline{\mathfrak{F}}(t))\,.\]
\end{proposition}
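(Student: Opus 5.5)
We will show that the pair $(x(t),y(t)):=(\E[\gamma_{B(t)}(\A(t))],\E[\lambda_{B(t)}(\A(t))])$ satisfies the linear-in-$(x,y)$ relations obtained from \eqref{eq:limit2} with $\overline{\mathfrak{F}}$ frozen as the driving rate. A uniqueness argument then identifies it with $(\overline{\mathfrak{S}},\overline{\mathfrak{F}})$. The approach is to decompose according to the last infection time before $t$, mimicking the first lemma of the SIR section.

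\textbf{Step 1: the infection intensity.} Since $B$ is driven by $Q$ with intensity $\Upsilon(s)=\gamma_{B(s)}(\A(s))\overline{\mathfrak{F}}(s)$, the compensator of $B$ is $\int_0^t\Upsilon(s)ds$. Hence
\[
\E[dB(s)]=\E[\gamma_{B(s)}(\A(s))]\,\overline{\mathfrak{F}}(s)\,ds=x(s)\overline{\mathfrak{F}}(s)\,ds .
\]

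\textbf{Step 2: the infectivity.} We write
\[
\lambda_{B(t)}(\A(t))=\lambda_0(t)\mathbf 1_{B(t)=0}+\int_0^t\int_0^\infty \lambda_{B(s^-)+1}(t-s)\,\mathbf 1_{\{\text{no jump in }(s,t]\}}\mathbf 1_{u\le\Upsilon(s^-)}Q(ds,du).
\]
By \eqref{disjoint_support}, during the support of $\lambda_{i}$ the susceptibility $\gamma_i$ vanishes, so no reinfection can occur while $\lambda_i>0$. Likewise $\lambda_0$ and $\gamma_0$ must have disjoint supports in the relevant cases; if this is not assumed for $\gamma_0$, we instead handle the term directly. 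Consequently the indicator of no further jump may be dropped, and in the $\lambda_0$ term $\mathbf 1_{B(t)=0}$ may be replaced by $1$ wherever $\lambda_0(t)>0$. The pair $\lambda_{B(s^-)+1}$ is a fresh copy, independent of $\mathcal F_{s^-}$ and of $Q$ on $(s,\infty)$. Taking expectations, using the compensator together with $\E[\lambda^\ast]<\infty$ to justify Fubini, gives
\[
y(t)=\overline{\lambda}_0(t)+\int_0^t\overline{\lambda}(t-s)x(s)\overline{\mathfrak{F}}(s)ds .
\]

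\textbf{Step 3: the susceptibility.} Similarly, we decompose according to the last infection time:
\[
\gamma_{B(t)}(\A(t))=\gamma_0(t)\mathbf 1_{B(t)=0}+\int_0^t\int_0^\infty\gamma_{B(s^-)+1}(t-s)\mathbf 1_{\{\text{no jump in }(s,t]\}}\mathbf 1_{u\le\Upsilon(s^-)}Q(ds,du).
\]
Conditionally on a jump at $s$ and on the new $\gamma=\gamma_{B(s^-)+1}$, the absence of jumps on $(s,t]$ has probability $\exp(-\int_s^t\gamma(r-s)\overline{\mathfrak{F}}(r)dr)$. Similarly, $\P(B(t)=0\mid\gamma_0)=\exp(-\int_0^t\gamma_0(r)\overline{\mathfrak{F}}(r)dr)$. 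To make this rigorous we condition on $\mathcal F_s$ and use the strong Markov-type independence of $Q$ restricted to $(s,\infty)\times\R_+$, together with independence of the fresh $\gamma$. We then apply the predictable projection formula $\E\int H(s,u)Q(ds,du)=\E\int H(s,u)ds\,du$ for a suitable predictable version, with the future-dependent factor replaced by its conditional expectation. This yields the first equation of \eqref{eq:limit2} with $\overline{\mathfrak{S}}(s)$ replaced by $x(s)$ and $\overline{\mathfrak{F}}$ kept as the given function.

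\textbf{Step 4: identification.} The first equation of Step 3 is a linear Volterra equation in $x$ with bounded kernel, since $\gamma\le1$ and $\overline{\mathfrak{F}}$ is locally bounded. Because $\overline{\mathfrak{S}}$ also solves it, Gronwall gives $x=\overline{\mathfrak{S}}$. Plugging this into Step 2 and comparing with the second equation of \eqref{eq:limit} gives $y=\overline{\mathfrak{F}}$.

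The main obstacle is Step 3: the integrand involves the future event that there is no jump on $(s,t]$, so it is not predictable. The cleanest route will be a direct computation via the sequence of jump times $\tau_1<\tau_2<\dots$. The plan is to sum over $i$ of $\E[\gamma_i(t-\tau_i)\mathbf 1_{\tau_i\le t<\tau_{i+1}}]$, using that the density of $\tau_i$ integrated against the conditional survival probability reproduces the kernel above.
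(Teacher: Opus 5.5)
Your proposal follows essentially the same route as the paper: decompose over the jump times $\tau_i$, condition on $\F_{\tau_i}$ to obtain the survival factor $g(\tau_i,t)$, sum using $\E[dB(s)]=x(s)\overline{\mathfrak{F}}(s)\,ds$, identify $x=\overline{\mathfrak{S}}$ by uniqueness for the linear Volterra equation, and then get $y=\overline{\mathfrak{F}}$ from the decomposition $\lambda_0(t)+\sum_{i\ge1}\lambda_i(t-\tau_i)$, which is justified by \eqref{disjoint_support}. One small correction: replacing $\lambda_0(t)\mathbf{1}_{B(t)=0}$ by $\lambda_0(t)$ also requires the disjoint-support property for $(\lambda_0,\gamma_0)$, which the paper uses implicitly, and your fallback of ``handling the term directly'' would not rescue the identity, because without that property the term equals $\E[\lambda_0(t)\exp(-\int_0^t\gamma_0(r)\overline{\mathfrak{F}}(r)dr)]$ rather than $\overline{\lambda}_0(t)$.
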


\begin{proof}
Let $\{\tau_i,i\ge1\}$ denote the successive jump times of $B(t)$.
Then
\begin{equation*}
 \gamma_{B(t)}(\A(t))=\gamma_0(t){\bf1}_{B(t)=0}+\sum_{i\ge1}\gamma_i(t-\tau_i){\bf1}_{B(t)=i}.
\end{equation*}
For the first term, we write
\begin{align}
	\E[\gamma_0(t){\bf1}_{B(t)=0}]&=\E[\gamma_0(t)\P(B(t)=0|\gamma_0)] \notag \\
&=\E\left[\gamma_0(t)\exp\left(-\int_0^t\gamma_0(r)\overline{\mathfrak{F}}(r)dr\right)\right]. \label{Egamma0}
\end{align}
Setting $\F_t=\sigma\{(\lambda_i,\gamma_i)_{i\le B(t)}, Q|_{[0,t]\times\R_+} \}$, we note that $\F_{\tau_i}$ and $Q|_{(\tau_i,t]\times\R_+}$  are independent. Hence on the event $\{\tau_i<t\}$,
\begin{equation*}
    \P(B(t)=i|\F_{\tau_i})=\exp\left(-\int_{\tau_i}^t\gamma_i(r-\tau_i)\overline{\mathfrak{F}}(r)dr\right).
\end{equation*}
Denoting by $\mu$ the law of $\gamma$, let us set, for $ 0 \leq s \leq t $,
\begin{equation*}
	g(s,t) = \int_{\mathbf{D}} \gamma(t-s) \exp\left(-\int_{s}^t \gamma(r-s) \overline{\mathfrak{F}}(r)dr \right) \mu(d\gamma).
\end{equation*}
Then
\begin{align}
\E\Bigg[\sum_{i\ge1}\gamma_i(t-\tau_i){\bf1}_{B(t)=i} \Bigg] \notag &=\sum_{i\ge1}\E\left[\gamma(t-\tau_i)\exp\left(-\int_{\tau_i}^t\gamma(r-\tau_i)\overline{\mathfrak{F}}(r)dr\right)\right]\\
    &=\sum_{i\ge1}\E\left[g(\tau_i,t)\right] \notag \\
    &=\E\left[\int_0^tg(s,t) dB(s) \right] \notag \\
    &=\int_0^t g(s,t) \E[\gamma_{B(s)}(\A(s))]\overline{\mathfrak{F}}(s)ds. \label{Egamma_positive}
\end{align}
Combining this with \eqref{Egamma0}, we obtain
\begin{multline*}
\E[\gamma_{B(t)}(\A(t))]=\E\left[\gamma_0(t)\exp\left(-\int_0^t\gamma_0(r)\overline{\mathfrak{F}}(r)dr\right)\right]\\
+\int_0^t\E\left[\gamma(t-s)\exp\left(-\int_{s}^t\gamma(r-s)\overline{\mathfrak{F}}(r)dr\right)\right] \E[\gamma_{B(s)}(\A(s))]\overline{\mathfrak{F}}(s)ds.
\end{multline*}
This proves that $\E[\gamma_{B(t)}(\A(t))]=\overline{\mathfrak{S}}(t)$, as a consequence of the uniqueness of a solution to that 
linear integral equation (which follows from the fact that $\overline{\mathfrak{F}}(t)\le\overline{\lambda}_0^\ast \exp(\overline{\lambda}^\ast t)$ for all $t\ge0$). 
%
%
Let us now show that $ \E[\lambda_{B(t)}(\A(t))] = \overline{\mathfrak{F}}(t) $.
For this, we write
\begin{align*}
\lambda_{B(t)}(\A(t))&=\lambda_0(t){\bf1}_{B(t)=0}+\sum_{i\ge1}\lambda_i(t-\tau_i){\bf1}_{B(t)=i}\\
&=\lambda_0(t)+\sum_{i\ge1}\lambda_i(t-\tau_i)
\end{align*}
Indeed, if $B(t)<i$, $t-\tau_i<0$ and $\lambda_i(t-\tau_i)=0$, while if $B(t)\ge i+1$, $\lambda_i(t-\tau_i)=0$ since then necessarily there exists $s\le t$ such that $\gamma_i(s-\tau_i)>0$, see \eqref{disjoint_support}.
 Then, proceeding as in \eqref{Egamma_positive},
\begin{align*}
\E[\lambda_{B(t)}(\A(t))]&=\E[\lambda_0(t)]+\E\Big[\sum_{i\ge1}\lambda_i(t-\tau_i)\Big]\\
&=\overline{\lambda}_0(t)+\int_0^t\overline{\lambda}(t-s)\E[\gamma_{B(s)}(\A(s))]\overline{\mathfrak{F}}(s)ds.
\end{align*}
The second part of the result then follows from the fact that $\E[\gamma_{B(t)}(\A(t))]=\overline{\mathfrak{S}}(t)$.
\end{proof}


\begin{remark} It follows from the last result that
    \begin{equation*}
        \Upsilon(t)=\gamma_{B(t)}(\A(t))\E\left[\lambda_{B(t)}(\A(t))\right].
    \end{equation*}
If we take that definition of the process $\Upsilon$, we see that the coefficient in the SDE for $B(t)$ is a function of the law of the solution. In this light, the equation for $B(t)$ is a McKean-Vlasov type SDE, which is harder to solve than the one we have used above. Such a McKean-Vlasov SDE was solved in the previous work \cite{FPPZ}.
\end{remark}
In order to prove Theorem \ref{LLNVIWI}, it remains to show that the error resulting from replacing $\{(B^N_k,\A^N_k),\ 1\le k\le N\}$ by $\{(B_k,\A_k),\ 1\le k\le N\}$ in the formulas \eqref{FNSN} tends to zero as $N\to\infty$.
There is a difficulty in establishing such a result under our weak assumption on the random function $\lambda$ (i.e. the fact that $\sup_{t\ge0}\lambda(t)$
is only supposed to be integrable). That difficulty is related to the fact that, under our assumptions on $\lambda$, we have no deterministic uniform bound on the mean infectivity
$\overline{\mathfrak{F}}^N(t)$. For that reason, we introduce the following truncated model.
We choose an arbitrary $T>0$, and prove Theorem \ref{LLNVIWI} on $[0,T]$. 
Let us set $M:=1+\overline{\lambda}^{\ast}_0 \exp(\overline{\lambda}^\ast T) > \sup_{t \in [0,T]} \overline{\mathfrak{F}}(t)$, and, given the same sequence $\{Q_k,\ 1\le k\le N\}$ of i.i.d. standard Poisson random measures on $\R_+^2$ as above, for each $N\ge 1$, let $\lbrace(B^{N,M}_k,\A^{N,M}_k), 1 \leq k \leq N \rbrace$ be the solution to the system of stochastic equations
\begin{equation*}
    \left\lbrace
    \begin{aligned}
        B^{N,M}_k(t)&=\int_0^t\int_0^\infty{\bf1}_{u\le\Upsilon^{N,M}_k(s^-)}Q_k(ds,du),\\
        \Upsilon^{N,M}_k(s)&=\gamma_{k,B^{N,M}_k(s)}(\A^{N,M}_k(s))\overline{\mathfrak{F}}^{N,M}(s),\\
        \A^{N,M}_k(t)&= t-(\sup\{s\in(0,t], B^{N,M}_k(s)=B^{N,M}_k(s^-)+1\}\vee0)\,,
    \end{aligned}
    \right.
\end{equation*}
with
\[ \overline{\mathfrak{F}}^{N,M}(t):=\left(\frac{1}{N}\sum_{j=1}^N\lambda_{j,B_j^{N,M}(t)}(\A^{N,M}_j(t))\right)\wedge M\,.\]
We also define
\[\overline{\mathfrak{S}}^{N,M}(t):=\frac{1}{N}\sum_{j=1}^N\gamma_{j,B^{N,M}_j(t)}(\A^{N,M}_j(t))\,.\]
We shall then prove the following theorem.

\begin{theorem}\label{LLNVIWI-M}
As $N\to\infty$, $\lbrace (\overline{\mathfrak{S}}^{N,M}(t),\overline{{\mathfrak{F}}}^{N,M}(t)), t \in [0,T] \rbrace$ converges in probability to the unique solution of 
equation \eqref{eq:limit} for the topology of uniform convergence.
\end{theorem}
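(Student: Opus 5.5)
We will couple the truncated system $\{(B^{N,M}_k,\A^{N,M}_k)\}$ with the i.i.d. limiting processes $\{(B_k,\A_k)\}$, built from the same $Q_k$ and $(\lambda_{k,i},\gamma_{k,i})$. Write
\[ \overline{\mathfrak{F}}^{N,\infty}(t):=\frac1N\sum_{k=1}^N\lambda_{k,B_k(t)}(\A_k(t)),\qquad \widetilde{\mathfrak{S}}^{N}(t):=\frac1N\sum_{k=1}^N\gamma_{k,B_k(t)}(\A_k(t)). \]
These are averages of i.i.d. $\mathbf{D}$-valued random elements. By Proposition~\ref{prop:ident}, their means are $\overline{\mathfrak{F}}(t)$ and $\overline{\mathfrak{S}}(t)$. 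The law of large numbers in $\mathbf{D}$ \cite{Rao} then gives a.s. convergence to $(\overline{\mathfrak{S}},\overline{\mathfrak{F}})$ in $\mathbf{D}$. The integrability needed is $\E[\lambda^\ast]+\E[\lambda_0^\ast]<\infty$, since $\sup_t\lambda_{k,B_k(t)}(\A_k(t))\le \lambda_{k,0}^\ast+\sum_{i\le B_k(T)}\lambda_{k,i}^\ast$. This last bound has finite expectation because $B_k(T)$ is dominated by a Poisson variable of mean $MT$ and is independent of the $\lambda_{k,i}$. Uniform convergence on $[0,T]$ needs some care, since the limit may be discontinuous. I would use the $L^1(\Omega)$ version of the LLN, as in the SIR section, for the time-integrated quantity $\int_0^T|\overline{\mathfrak{F}}^{N,\infty}(s)-\overline{\mathfrak{F}}(s)|ds$. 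For the uniform statement I would invoke convergence in $\mathbf{D}$ toward the limit, whose jumps are deterministic and the same for the approximations.

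The key step is a coupling estimate mimicking the SIR Lemma. Set $\Delta^N(t):=\frac1N\sum_k \E[\sup_{s\le t}|B^{N,M}_k(s)-B_k(s)|]$. Until the first time the two counting processes differ, the ages and the functions $\gamma_{k,i}$ coincide. So the first discrepancy must come from a Poisson atom lying between $\gamma\overline{\mathfrak{F}}^{N,M}$ and $\gamma\overline{\mathfrak{F}}$. Since $\gamma\le1$, this gives
\[ \P\Big(\sup_{s\le t}|B^{N,M}_k(s)-B_k(s)|\neq0\Big)\le \E\int_0^t|\overline{\mathfrak{F}}^{N,M}(s)-\overline{\mathfrak{F}}(s)|ds. \]
I would work with the indicator of discrepancy rather than the difference itself, since after decoupling the processes may diverge. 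Next, using $|a\wedge M-b|\le|a-b|$ for $b\le M$, we have $|\overline{\mathfrak{F}}^{N,M}-\overline{\mathfrak{F}}|\le|\overline{\mathfrak{F}}^{N,\infty}-\overline{\mathfrak{F}}|+\frac1N\sum_k|\lambda_{k,B^{N,M}_k}(\A^{N,M}_k)-\lambda_{k,B_k}(\A_k)|$. The last summand vanishes unless individual $k$ has decoupled by time $s$, and is otherwise bounded by $\Lambda_k:=\lambda_{k,0}^\ast+\sum_{i\le B^{N,M}_k(T)+B_k(T)}\lambda^\ast_{k,i}$. Putting $\delta^N(t):=\frac1N\sum_k\P(\text{$k$ decoupled by } t)$, we would like
\[ \delta^N(t)\le \E\int_0^T|\overline{\mathfrak{F}}^{N,\infty}-\overline{\mathfrak{F}}|ds+\int_0^t \frac1N\sum_k\E[\Lambda_k{\bf 1}_{\text{$k$ decoupled by } s}]ds. \]

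The main obstacle is this last expectation. The decoupling event is not independent of $\Lambda_k$: it depends on the past $\lambda$'s of everyone, including $k$ itself, through $\overline{\mathfrak{F}}^{N,M}$. The truncation at $M$ is exactly what helps here. The rate of $B^{N,M}_k$ is bounded by $M$, so $B^{N,M}_k(T)$ and $B_k(T)$ are both dominated by the Poisson count $P_k:=Q_k([0,T]\times[0,M])$. This count is independent of all the $\lambda$'s. To handle the dependence I would truncate: for $K>0$, write $\Lambda_k\le K+\Lambda_k{\bf 1}_{\Lambda_k>K}$ and bound $\tilde\Lambda_k:=\lambda_{k,0}^\ast+\sum_{i\le 2P_k}\lambda_{k,i}^\ast\ge\Lambda_k$. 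The inner term is then at most $K\delta^N(s)+\E[\tilde\Lambda_1{\bf 1}_{\tilde\Lambda_1>K}]$, where $\E[\tilde\Lambda_1]<\infty$ by \eqref{assumption_1st_moment}. Gronwall's lemma gives
\[ \delta^N(T)\le\Big(\E\int_0^T|\overline{\mathfrak{F}}^{N,\infty}-\overline{\mathfrak{F}}|ds+T\,\E[\tilde\Lambda_1{\bf1}_{\tilde\Lambda_1>K}]\Big)e^{KT}. \]
Letting $N\to\infty$ and then $K\to\infty$ does not immediately work, because $e^{KT}$ blows up. I would instead choose $K=K_N\to\infty$ slowly enough that the first term times $e^{K_NT}$ still vanishes. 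The second term then requires uniform integrability at rate $e^{K_NT}$, which fails in general. So I would replace Gronwall by a step-by-step argument on short intervals of length $h$, with $h$ chosen depending on $K$. If that still fails, I would use the sharper observation that $\frac1N\sum_k\Lambda_k{\bf 1}_{\text{dec}}\le \frac1N\sum_k \tilde\Lambda_k{\bf1}_{\tilde\Lambda_k>K}+K\cdot(\text{fraction decoupled})$. The first piece converges a.s. to $\E[\tilde\Lambda_1{\bf 1}_{\tilde\Lambda_1>K}]$ by the LLN. Working pathwise in probability with the fraction decoupled, Gronwall applies to a random but controlled quantity, and taking $K$ large but fixed gives $\limsup_N\P(\text{fraction decoupled}>\varepsilon)\to0$.

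To conclude, $\sup_{t\le T}|\overline{\mathfrak{F}}^{N,M}(t)-(\overline{\mathfrak{F}}^{N,\infty}(t)\wedge M)|\le\frac1N\sum_k\tilde\Lambda_k{\bf 1}_{\text{dec}}$, and $\sup_{t\le T}|\overline{\mathfrak{S}}^{N,M}(t)-\widetilde{\mathfrak{S}}^N(t)|\le$ fraction decoupled. Both tend to $0$ in probability. Combined with the LLN for the i.i.d. averages and $\overline{\mathfrak{F}}<M$ on $[0,T]$, this proves Theorem~\ref{LLNVIWI-M}.
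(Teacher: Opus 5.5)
Your architecture matches the paper's: coupling with the i.i.d.\ processes $(B_k,\A_k)$, the LLN for the i.i.d.\ averages via Proposition~\ref{prop:ident}, bounding the decoupling probability by $\E\int_0^t|\overline{\mathfrak{F}}^{N,M}(s)-\overline{\mathfrak{F}}(s)|ds$, and splitting $|\overline{\mathfrak{F}}^{N,M}-\overline{\mathfrak{F}}|$. But the step you flag as the main obstacle is never resolved, and none of your fixes closes it.

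Once you bound the post-decoupling discrepancy by $\Lambda_k$, you have included $\lambda^\ast_{k,0}$ and every pre-decoupling $\lambda^\ast_{k,i}$, and these are genuinely correlated with the decoupling event. The only usable information is then that the event has small probability. The sharpest estimate obtainable this way is $\delta^N(t)\le a_N+\int_0^t\psi(\delta^N(s))ds$ with $\psi(p)=\sup_{\P(A)\le p}\E[\tilde\Lambda_1{\bf 1}_A]$. For a merely integrable $\lambda^\ast$ (e.g.\ tail $x^{-\alpha}$ with $\alpha>1$) one has $\psi(p)\asymp p^{1-1/\alpha}$, which violates the Osgood condition, so $a_N\to0$ does not force $\delta^N(T)\to0$. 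This is the common reason your three attempts fail:
\begin{itemize}
\item The truncated Gronwall leaves the term $T\,\E[\tilde\Lambda_1{\bf 1}_{\tilde\Lambda_1>K}]e^{KT}$.
\item Steps of length $h\sim 1/K$ simply reproduce the factor $e^{KT}$ after $KT$ iterations.
\item The pathwise fallback fails in the same way. After a martingale correction (harmless, but needed, since the decoupled fraction $D^N$ is only bounded by its compensator up to a martingale), Gronwall gives $\limsup_N D^N(T)\le T\,\E[\tilde\Lambda_1{\bf 1}_{\tilde\Lambda_1>K}]e^{KT}$ in probability. This does not tend to $0$ as $K\to\infty$.
\end{itemize}

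The missing idea is structural: you must not bound the difference by $\Lambda_k$. Let $\sigma_{N,k}$ be the first time $B^{N,M}_k\neq B_k$. Just before $\sigma_{N,k}$ the two systems coincide and one of them jumps, which requires $\gamma_{k,B_k(\sigma_{N,k}^-)}(\A_k(\sigma_{N,k}^-))>0$. By \eqref{disjoint_support}, the current infectivity is then zero from that age on, in both systems. So after $\sigma_{N,k}$, the only nonzero contributions to $|\lambda_{k,B^{N,M}_k(t)}(\A^{N,M}_k(t))-\lambda_{k,B_k(t)}(\A_k(t))|$ come from infectivity functions drawn at jump times $\ge\sigma_{N,k}$. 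Each of these is independent of everything before its draw.
\begin{itemize}
\item The draw at $\sigma_{N,k}$ contributes at most $\E[\lambda^\ast]\P(\sigma_{N,k}\le t)$.
\item Later draws are integrated against a marked Poisson measure with predictable integrand ${\bf 1}_{\{\sigma_{N,k}<s\}}{\bf 1}_{\{u\le\Upsilon(s^-)\}}$. Both intensities are at most $M$; for $B^{N,M}_k$ this is where the truncation is really needed. The compensator then gives at most $\E[\lambda^\ast]Mt\,\P(\sigma_{N,k}\le t)$.
\end{itemize}
This linear bound is \eqref{estim13}, and with it plain Gronwall closes with no $K$ at all.

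Separately, uniform convergence of the i.i.d.\ averages follows directly from Rao's theorem, which gives a.s.\ convergence in the sup norm when $\E\sup_t|X(t)|<\infty$. Your claim that the approximations share the jumps of the limit is not correct, and it is not needed.
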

We first note that Theorem \ref{LLNVIWI-M} implies Theorem \ref{LLNVIWI}. Indeed, Theorem \ref{LLNVIWI-M} implies that
\begin{equation*}
    \lim_{N\to\infty}\P\left(\sup_{0\le t\le T}\overline{{\mathfrak{F}}}^{N,M}(t)> M\right)=0\,.
\end{equation*}
But, on the event
$\left\{\sup_{0\le t\le T}\overline{{\mathfrak{F}}}^{N,M}(t)\le M\right\}$, $\overline{{\mathfrak{F}}}^{N,M}(t)=\overline{{\mathfrak{F}}}^{N}(t)$
for all $t\in[0,T]$. Hence Theorem \ref{LLNVIWI-M} implies that $(\overline{\mathfrak{S}}^{N}(t),\overline{{\mathfrak{F}}}^{N}(t))$ converges in probability as $N\to\infty$, uniformly for $t\in[0,T]$, to the unique solution of 
equation \eqref{eq:limit}.
Since $T$ was arbitrary, this yields the statement of Theorem \ref{LLNVIWI}.

Now, given Proposition \ref{prop:ident} and the law of large numbers in $\mathbf{D}$ (see \cite{Rao}), Theorem \ref{LLNVIWI-M}
will follow from the next lemma.

\begin{lemma}\label{le:diff} 
    As $N\to\infty$, the following convergences hold in probability, uniformly in $t\in[0,T]$,
    \begin{align*}
        \frac{1}{N}\sum_{k=1}^N\left[\gamma_{k,B^{N,M}_k(t)}(\A^{N,M}_k(t))-\gamma_{k,B_k(t)}(\A_k(t))\right]\to0,\\
        \frac{1}{N}\sum_{k=1}^N\left[\lambda_{k,B^{N,M}_k(t)}(\A^{N,M}_k(t))-\lambda_{k,B_k(t)}(\A_k(t))\right]\to0\,.
    \end{align*}
\end{lemma}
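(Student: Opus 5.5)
We would follow the coupling argument of the SIR section: bound the expected fraction of individuals whose trajectories $(B^{N,M}_k,\A^{N,M}_k)$ and $(B_k,\A_k)$ differ somewhere on $[0,T]$, then close the estimate with Gronwall's lemma. The truncation at level $M$ is what makes this work, since it bounds the intensities $\Upsilon^{N,M}_k\le M$ deterministically.

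\emph{Step 1 (reduction to an indicator).} Let $\sigma^N_k:=\inf\{t\ge0:\ B^{N,M}_k(t)\neq B_k(t)\}$. For $t<\sigma^N_k$ both processes have the same jump times, so the ages and the indices $B$ coincide. Hence
\[\Big|\gamma_{k,B^{N,M}_k(t)}(\A^{N,M}_k(t))-\gamma_{k,B_k(t)}(\A_k(t))\Big|\le {\bf1}_{\sigma^N_k\le t},\]
and
\[\Big|\lambda_{k,B^{N,M}_k(t)}(\A^{N,M}_k(t))-\lambda_{k,B_k(t)}(\A_k(t))\Big|\le {\bf1}_{\sigma^N_k\le t}\sum_{i\le B^{N,M}_k(T)\vee B_k(T)}\lambda^\ast_{k,i}.\]
It therefore suffices to show that $\P(\sigma^N_1\le T)\to0$; by exchangeability this probability equals $N^{-1}\sum_k\P(\sigma^N_k\le T)$. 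For the $\lambda$ term we also need uniform integrability. We would truncate the number of infections at a level $L$: since $\Upsilon\le M$, $B_k(T)$ and $B^{N,M}_k(T)$ are dominated by $Q_k([0,T]\times[0,M])$. We would then write the $\lambda$ term as a sum over $i\le L$ of $\lambda^\ast_{k,i}{\bf1}_{\sigma^N_k\le T}$ plus a tail. The tail is controlled by $\E[\sum_{i\le P}\lambda^\ast_{k,i}{\bf1}_{P>L}]$ with $P$ Poisson$(MT)$. This is not independent of the $\lambda$'s a priori, but each $\lambda^\ast_{k,i}$ is independent of the event that the $i$-th infection occurs, because $\lambda_{k,i}$ is drawn at that time and the intensity depends on past $\lambda$'s only through $\overline{\mathfrak{F}}^{N,M}$. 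Cleanest is to bound the sum by $\sum_{i\ge1}\lambda^\ast_{k,i}{\bf1}_{Q_k([0,T]\times[0,M])\ge i}$, which is genuinely independent and has finite mean $\overline{\lambda}^\ast\,\E[P]$. For each fixed $i\le L$, the quantity $\E[\lambda^\ast_{k,i}{\bf1}_{\sigma^N_k\le T}]$ tends to $0$ once $\P(\sigma^N_k\le T)\to0$, by uniform integrability of the single variable $\lambda^\ast_{k,i}$. Bounds in $L^1$ uniform in $t$ then give convergence in probability uniformly in $t$.

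\emph{Step 2 (Gronwall).} As in the SIR lemma, up to $\sigma^N_k$ both counting processes use the same $\gamma$, so
\[\P(\sigma^N_k\le t)\le \E\int_0^t\big|\overline{\mathfrak{F}}^{N,M}(s)-\overline{\mathfrak{F}}(s)\big|\,ds.\]
Because $\overline{\mathfrak{F}}\le M$ on $[0,T]$ and $x\mapsto x\wedge M$ is $1$-Lipschitz, this is at most
\[\E\int_0^t\Big|\frac1N\sum_j\lambda_{j,B_j(s)}(\A_j(s))-\overline{\mathfrak{F}}(s)\Big|ds+\E\int_0^t\frac1N\sum_j\Delta_j(s)\,ds,\]
where $\Delta_j(s)$ is the $\lambda$-difference from Step 1. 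The first term tends to $0$ by the $L^1$ law of large numbers, using Proposition~\ref{prop:ident} and $\E[\sup_{s\le T}\lambda_{B(s)}(\A(s))]<\infty$, which follows from the domination above. For the second term we need the bound $\E[\Delta_j(s)]\le C_L\,\P(\sigma^N_j\le s)+\varepsilon_L$, and this is the main obstacle.

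\emph{The main obstacle} is obtaining $\E[\lambda^\ast_{j,i}{\bf1}_{\sigma^N_j\le s}]\le \overline{\lambda}^\ast\,\P(\sigma^N_j\le s)$, since $\lambda_{j,i}$ affects $\overline{\mathfrak{F}}^{N,M}$ and hence $\sigma^N_j$. We would handle it by observing that ${\bf1}_{\sigma^N_j\le s}$ matters only when the $i$-th infection happens before $s$, and that $\lambda_{j,i}$ influences the system only after $\tau_{j,i}$. We would therefore decouple by conditioning on $\F_{\tau_{j,i}}$ and split the event into $\{\sigma^N_j<\tau_{j,i}\}$, on which independence applies, and $\{\sigma^N_j\ge\tau_{j,i}\}$. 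Alternatively we would pass to $\tilde\lambda^\ast$, the bound through $Q_j([0,T]\times[0,M])$. If exact independence fails, we would accept a $1/N$ error: one individual changes $\overline{\mathfrak{F}}^{N,M}$ by at most $\lambda^\ast_{j,i}/N$, and this contribution vanishes after averaging. With this in hand,
\[\P(\sigma^N_1\le t)\le \varepsilon_N+\varepsilon_L+C_L\int_0^t\P(\sigma^N_1\le s)\,ds,\]
and Gronwall's lemma, followed by $N\to\infty$ and then $L\to\infty$, gives $\P(\sigma^N_1\le T)\to0$. This proves both convergences of the lemma.
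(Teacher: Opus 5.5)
Your Step 1 reduction is sound: you dominate all infection counts by $Q_k([0,T]\times[0,M])$, which is independent of the marks, and then use uniform integrability. The first inequality of Step 2 is also sound.

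The gap is exactly the step you call the main obstacle. You never establish a bound $\E[\Delta_j(s)]\le C\,\P(\sigma^N_j\le s)+o(1)$ with $C$ independent of $N$, and none of your three suggestions supplies it.
\begin{itemize}
\item Splitting at $\tau_{j,i}$ only handles marks drawn at or after $\sigma^N_j$. On $\{\sigma^N_j>\tau_{j,i}\}$ the mark $\lambda_{j,i}$ has already fed into $\overline{\mathfrak{F}}^{N,M}$ and is genuinely correlated with $\sigma^N_j$, and you say nothing about this case.
\item Dominating by $Q_j([0,T]\times[0,M])$ decouples the marks from $Q_j$, not from $\sigma^N_j$.
\item The ``$1/N$ error'' is not an argument. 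Changing one mark moves $\overline{\mathfrak{F}}^{N,M}$ by $\lambda^\ast_{j,i}/N$ only at first; the perturbation then propagates through everybody's infection times. Controlling that propagation is a coupling estimate with the same correlation problem.
\end{itemize}
Uniform integrability cannot replace the missing bound either. With $Z=\sum_{i\le Q_j([0,T]\times[0,M])}\lambda^\ast_{j,i}$, the estimate $\E[Z{\bf1}_A]\le K\P(A)+\E[Z{\bf1}_{\{Z>K\}}]$ feeds Gronwall the constant $K$. It leaves $\E[Z{\bf1}_{\{Z>K\}}]\,e^{KT}$, which need not tend to $0$ under a first-moment assumption. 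Your $L$-truncation survives $L\to\infty$ only if each retained term obeys the exact bound $\E[\lambda^\ast_{j,i}{\bf1}_{\{\sigma^N_j\le s\}}]\le\overline{\lambda}^\ast\P(\sigma^N_j\le s)$. That is precisely what is unproved, and your crude bound contains marks drawn before $\sigma^N_j$, so it has no reason to satisfy it.

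The missing idea is to use \eqref{disjoint_support} to sharpen your Step 1 bound. A discrepancy can start at $\sigma_{N,k}$ only if the $\gamma$-factor just before, which is common to both systems, is positive: $\gamma_{k,B_k(\sigma_{N,k}^-)}(\A_k(\sigma_{N,k}^-))>0$. Then the current infectivity is already $0$ in both systems and stays $0$. Hence only marks of infections occurring at or after $\sigma_{N,k}$ enter the difference, which is bounded by ${\bf1}_{\{\sigma_{N,k}\le t\}}\big\{\lambda^\ast_{k,B^{N,M}_k(t)}{\bf1}_{\{B^{N,M}_k(t)>B^{N,M}_k(\sigma_{N,k}^-)\}}+\lambda^\ast_{k,B_k(t)}{\bf1}_{\{B_k(t)>B_k(\sigma_{N,k}^-)\}}\big\}$. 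Your sum over $i\le B^{N,M}_k(T)\vee B_k(T)$ keeps exactly the pre-$\sigma$ marks, which are the only ones correlated with $\sigma_{N,k}$. The remaining terms are then handled as follows.
\begin{itemize}
\item The mark drawn at $\sigma_{N,k}$ is independent of $\{\sigma_{N,k}\le t\}$, so it contributes at most $\E[\lambda^\ast]\P(\sigma_{N,k}\le t)$.
\item Marks of later infections are dominated by $\int_0^t\int_0^M\int_{\mathbf{D}}{\bf1}_{\{\sigma_{N,k}<s\}}\lambda^\ast\,\tilde Q_k(ds,du,d\lambda)$. Here $\tilde Q_k$ is a $\lambda$-marked version of $Q_k$, the integrand is predictable, and the truncation at $M$ bounds the intensity in both systems. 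The expectation is at most $\E[\lambda^\ast]Mt\,\P(\sigma_{N,k}\le t)$.
\end{itemize}
This gives $\E[\Delta_k(t)]\le\E[\lambda^\ast](1+MT)\,\P(\sigma_{N,k}\le t)$ with a constant independent of $N$. Gronwall then closes directly, with no truncation in $L$.
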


Lemma~\ref{le:diff} itself follows from the following lemma.
\begin{lemma}\label{le:10}
As $N\to\infty$, 
\[ \frac{1}{N}\E\left[\sum_{k=1}^N\sup_{0\le t\le T}|B^{N,M}_k(t)-B_k(t)|\right]\to0\,.\]
\end{lemma}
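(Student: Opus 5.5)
We follow the coupling-plus-Gronwall scheme already used for the SIR model, the truncation at level $M$ providing the uniform bound that replaced the a.s. bound on $\lambda^\ast$. The argument rests on a coupling bound for $B^{N,M}_k$, on a split of the force-of-infection error into an LLN term and a mismatch term, and on a Gronwall closure.

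\textbf{Coupling bound.} Since $B^{N,M}_k$ and $B_k$ are driven by the same $Q_k$, up to the first time they differ they have the same jump times. Hence $\A^{N,M}_k=\A_k$ and the same $\gamma_{k,i}$ is in use. Before that time the intensities are $\gamma_{k,i}(\A_k(s))\overline{\mathfrak{F}}^{N,M}(s)$ and $\gamma_{k,i}(\A_k(s))\overline{\mathfrak{F}}(s)$, with $\gamma\le1$. The first discrepancy must therefore come from an atom of $Q_k$ in the strip between $\overline{\mathfrak{F}}^{N,M}(s^-)\wedge\overline{\mathfrak{F}}(s)$ and the corresponding maximum. Rather than bounding $|B^{N,M}_k-B_k|$ (which could be large after decoupling), I would work with the indicator $\delta_k(t):={\bf1}\{B^{N,M}_k\neq B_k \text{ somewhere on }[0,t]\}$. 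Lemma~\ref{le:10} is then understood in this form; the counts themselves are controlled since both intensities are bounded by $M$ on $[0,T]$, so $\sup_t|B^{N,M}_k-B_k|\le \delta_k(T)\,(B^{M}_k(T))$ with a Poisson$(MT)$ dominating count. Cauchy--Schwarz or a truncation argument then transfers smallness of $\E[\delta_k(T)]$ to the stated expectation. The indicator satisfies
\[
\E[\delta_k(t)]\le \E\int_0^t|\overline{\mathfrak{F}}^{N,M}(s)-\overline{\mathfrak{F}}(s)|\,ds .
\]

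\textbf{Splitting the force of infection.} Since $\overline{\mathfrak{F}}\le M$ and $x\mapsto x\wedge M$ is $1$-Lipschitz, the error is controlled by the untruncated average:
\[
|\overline{\mathfrak{F}}^{N,M}(s)-\overline{\mathfrak{F}}(s)|\le \Big|\frac1N\sum_j\lambda_{j,B_j(s)}(\A_j(s))-\overline{\mathfrak{F}}(s)\Big|+\frac1N\sum_j\big|\lambda_{j,B^{N,M}_j(s)}(\A^{N,M}_j(s))-\lambda_{j,B_j(s)}(\A_j(s))\big|.
\]
The first term has vanishing $L^1$ norm, uniformly in $s\le T$, by Proposition~\ref{prop:ident} and the $L^1$ law of large numbers. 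To get this for fixed $s$, integrate in $s$; the summands are i.i.d. and dominated by $\lambda^\ast_{j,0}+\sum_{i}\lambda^\ast_{j,i}{\bf1}_{B_j(T)\ge i-1}$. This dominating variable is integrable because $B_j(T)$ is dominated by a Poisson$(MT)$ variable independent of the later $\lambda_{j,i}$, which gives a bound of the form $\overline{\lambda}^\ast(1+MT)$. Dominated convergence then handles the integral.

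\textbf{Main obstacle and Gronwall closure.} The second term is the main obstacle. It is nonzero only if $\delta_j(s)=1$, and then it is at most $\sum_{i\le B^{N,M}_j(s)\vee B_j(s)}\lambda^\ast_{j,i}$. The difficulty is that $\lambda^\ast_{j,i}$ is not independent of $\delta_j$ for $i$ already drawn. In the SIR case there was a single $\lambda_j$, independent of the coupling, because there $\gamma\equiv0$ and infectivity does not feed back into one's own jumps. Here, too, the $\lambda_{j,i}$ do not enter the $j$-th intensity directly, only through the mean field. I would exploit the fact that the truncated field $\overline{\mathfrak{F}}^{N,M}$ depends on $\lambda_{j,\cdot}$ only through a $1/N$ contribution. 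To do so, introduce the system $\tilde B^{N,M,(j)}$ in which individual $j$'s infectivity is removed from the mean field. This system is independent of $(\lambda_{j,i})_i$ and differs from the original field by at most $\frac1N\sum_i\lambda^\ast_{j,i}{\bf1}_{i\le\text{count}}$. Then $\E[\lambda^\ast_{j,i}\delta_j(s){\bf1}_{i\le \cdot}]\le \overline{\lambda}^\ast\,\E[\tilde\delta_j(s)\cdots]+O(1/N)$, where the bounded counts are handled by the Poisson$(M)$ domination and a truncation at a large count level $K$. This yields
\[
\E[\delta_k(t)]\le \varepsilon_N+C_{M,T}\,\overline{\lambda}^\ast\int_0^t\E[\delta_k(s)]\,ds,
\]
using exchangeability, with $\varepsilon_N\to0$. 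Gronwall's lemma concludes, and Lemma~\ref{le:diff} follows by the same splitting.
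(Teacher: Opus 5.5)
\textbf{There is a gap at the step you flag as the main obstacle.} Your reduction to the indicator $\delta_k$, the Cauchy--Schwarz transfer back to $\sup_t|B^{N,M}_k-B_k|$, the $L^1$ law of large numbers for the first term, and the Gronwall skeleton are all fine. The leave-one-out construction, however, does not work as stated.

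The leave-one-out field does not differ from $\overline{\mathfrak{F}}^{N,M}$ only by the direct contribution $\frac1N\sum_i\lambda^\ast_{j,i}{\bf1}_{i\le\cdot}$. Removing individual $j$ can change the infection times of every other individual, which changes the field again, and so on. Bounding $\P(B^{N,M}_j\neq\tilde B^{N,M,(j)}_j)$, weighted by $\lambda^\ast_{j,i}$, is therefore itself a propagation-of-error estimate over all $N$ individuals. In that estimate the same correlation between already-drawn $\lambda^\ast_{j',i}$ and discrepancy indicators reappears, so the argument is circular. Separately, the claimed $O(1/N)$ would need $\E[(\lambda^\ast)^2]<\infty$; with only a first moment you could at best get $o(1)$. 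Without this comparison, the passage from $\E[\tilde\delta_j(s)\cdots]$ back to $\E[\delta_k(s)]$, which your Gronwall inequality requires, is missing.

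\textbf{The missing idea.} The already-drawn $\lambda_{k,i}$ never need to enter, thanks to \eqref{disjoint_support}; your crude bound $\sum_{i\le B^{N,M}_j(s)\vee B_j(s)}\lambda^\ast_{j,i}$ discards exactly this structure. Let $\sigma_{N,k}=\inf\{t:B^{N,M}_k(t)\neq B_k(t)\}$ and $J=B_k(\sigma_{N,k}^-)$.

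\begin{itemize}
\item At $\sigma_{N,k}$ one of the two processes jumps, so the common susceptibility $\gamma_{k,J}(\A_k(\sigma_{N,k}^-))$ is positive. By \eqref{disjoint_support}, $\lambda_{k,J}$ then vanishes from that age on.
\item Hence the $\lambda$-mismatch at time $t$ is bounded, for each of the two processes, by ${\bf1}_{\{\sigma_{N,k}\le t\}}$ times the sum of $\lambda^\ast_{k,j}$ over $J<j\le J+1+Q_k((\sigma_{N,k},t]\times[0,M])$. The truncation at $M$ is what makes this count also dominate the jumps of $B^{N,M}_k$.
\item The functions $\lambda_{k,j}$, $j>J$, have not been used by the system before $\sigma_{N,k}$. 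They are therefore independent of $\sigma_{N,k}$ and of the later $Q_k$-count.
\end{itemize}

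This gives
$\E\big|\lambda_{k,B^{N,M}_k(t)}(\A^{N,M}_k(t))-\lambda_{k,B_k(t)}(\A_k(t))\big|\le 2\E[\lambda^\ast](1+Mt)\P(\sigma_{N,k}\le t)$,
which closes the Gronwall argument directly, with no leave-one-out system needed.
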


\begin{proof}	
We note that
\begin{multline}\label{estim10} 
\E\left[\sup_{0\le r\le t}|B^{N,M}_k(r)-B_k(r)|\right]\\
\begin{aligned}
    &\le \E\left[\int_0^t |\Upsilon_k(s)-\Upsilon_k^{N,M}(s)| ds \right] \\
    &\le\E\left[\int_0^t|\gamma_{k,B_k(s)}(\A_k(s))\overline{\mathfrak{F}}(s)-\gamma_{k,B^{N,M}_k(s)}(\A^{N,M}_k(s))\overline{\mathfrak{F}}^{N,M}(s)|ds \right]\\
    &\le M\E\left[\int_0^t|\gamma_{k,B_k(s)}(\A_k(s))-\gamma_{k,B^{N,M}_k(s)}(\A^{N,M}_k(s))|ds \right]\\
    &\qquad + \E\left[\int_0^t|\overline{\mathfrak{F}}(s)-\overline{\mathfrak{F}}^{N,M}(s)|ds\right],
\end{aligned}
\end{multline}
where we have used the fact that both $\overline{\mathfrak{F}}(t)\le M$,  $\overline{\mathfrak{F}}^{N,M}(t)\le M$, for all $0\le t\le T$ and that $ 0\leq \gamma_{k,i} \leq 1$. We first write
\begin{align}
\E\left[|\gamma_{k,B_k(s)}(\A_k(t))-\gamma_{k,B^{N,M}_k(t)}(\A^{N,M}_k(t))|\right]&\le\P\Big(\sup_{0\le r\le t}|B^{N,M}_k(r)-B_k(r)|\ge1\Big)\nonumber\\
&\le\E\Big[\sup_{0\le r\le t}|B^{N,M}_k(r)-B_k(r)|\Big]. \label{estim11}
\end{align}
In addition, we have
\begin{align}
|\overline{\mathfrak{F}}^{N,M}(t)-\overline{\mathfrak{F}}(t)|&= \left|\left(\frac{1}{N}\sum_{k=1}^N\lambda_{k,B^{N,M}_k(t)}(\A^{N,M}_k(t))\right)\wedge M-
\overline{\mathfrak{F}}(t)\wedge M\right|\nonumber\\
&\le\left|\frac{1}{N}\sum_{k=1}^N\lambda_{k,B^{N,M}_k(t)}(\A^{N,M}_k(t))-\overline{\mathfrak{F}}(t)\right|\nonumber\\
&\le\left|\frac{1}{N}\sum_{k=1}^N\left\lbrace\lambda_{k,B^{N,M}_k(t)}(\A^{N,M}_k(t))-\lambda_{k,B_k(t)}(\A_k(t))\right\rbrace\right|\nonumber\\&\quad
+\frac{1}{N}\left|\sum_{k=1}^N\left\lbrace\lambda_{k,B_k(t)}(\A_k(t))-\E\left[\lambda_{B(t)}(\A(t))\right] \right\rbrace\right|. \label{estim12}
\end{align}
We have \[ \lambda_{B(t)}(\A(t))\le\lambda^\ast_{B(t)}\le\sum_{j=0}^{B(t)}\lambda^\ast_j\le\sum_{j=0}^{\tilde{B}(t)}\lambda^\ast_j,\] where
$\tilde{B}(t):=\int_0^t\int_0^{\overline{\mathfrak{F}}(s)}Q(ds,du)$. Clearly $\tilde{B}(t)\sim Poi\left(\int_0^t\overline{\mathfrak{F}}(s)ds \right)$ and it is independent 
of the random variables $\{\lambda^\ast_j,\ j\ge0\}$. Hence
\begin{align*}
 \E\left[\lambda_{B(t)}(\A(t))\right]&\le\E[\lambda^\ast]\ \int_0^t\overline{\mathfrak{F}}(s)ds\,.
 \end{align*}
 It then follows from the usual law of large numbers for i.i.d. sequences (where the convergences holds in $L^1(\Omega)$, see
 e.g. Exercise 24 page 59 in \cite{Kal}) that the second term on the right of \eqref{estim12} tends to $0$ as $N\to\infty$, and the same is true for the integral of the same quantity on any bounded interval.

In order to complete the proof of Lemma \ref{le:10}, we define the stopping time
\[ \sigma_{N,k}:=\inf\{t\ge0,\ B^{N,M}_k(t)\not=B_k(t)\}\,,\]
and we note that $\lambda_{k,B^{N,M}_k(\sigma_{N,k}^-)}(\A^{N,M}_k(\sigma_{N,k}^-))=\lambda_{k,B_k(\sigma_{N,k}^-)}(\A_k(\sigma_{N,k}^-))=0$, since necessarily
$\gamma_{k,B^{N,M}_k(\sigma_{N,k}^-)}(\A^{N,M}_k(\sigma_{N,k}^-))=\gamma_{k,B_k(\sigma_{N,k}^-)}(\A_k(\sigma_{N,k}^-))>0$.
By \eqref{disjoint_support}, we thus have
\begin{align}\label{ineq}
&\left|\lambda_{k,B^{N,M}_k(t)}(\A^{N,M}_k(t))-\lambda_{k,B_k(t)}(\A_k(t))\right| \notag\\
&\quad\quad\quad\le{\bf1}_{\{\sigma_{N,k}\le t\}}\Big\{\lambda^\ast_{k,B^{N,M}_k(t)}{\bf1}_{B^{N,M}_k(t)>B^{N,M}_k(\sigma_{N,k}^-)}
+\lambda^\ast_{k,B_k(t)}{\bf1}_{B_k(t)>B_k(\sigma_{N,k}^-)}\Big\}.
\end{align}
%
For the second term, we write
\begin{align*}
{\bf1}_{\{\sigma_{N,k}\le t\}}\lambda^\ast_{k,B_k(t)}{\bf1}_{B_k(t)>B_k(\sigma_{N,k}^-)}
&={\bf1}_{\{\sigma_{N,k}\le t\}}\sum_{j=1}^\infty{\bf1}_{B_k(t)=j}\lambda^\ast_{k,j}{\bf1}_{\{B_k(t)> B_k(\sigma_{N,k}^-)\}}\\
&\le{\bf1}_{\{\sigma_{N,k}\le t\}}\sum_{j=1}^\infty{\bf1}_{B_k(\sigma_{N,k}^-)<j\le B_k(t)}\lambda^\ast_{k,j}\\
&={\bf1}_{\{\sigma_{N,k}\le t\}\cap\{\overline{\mathfrak{F}}(\sigma_{N,k}^-)>\overline{\mathfrak{F}}^{N,M}(\sigma_{N,k}^-)\}}\lambda^\ast_{k,B_k(\sigma_{N,k})}\\&\quad+
\int_0^t\int_0^\infty\int_{\mathbf{D}}{\bf1}_{\{\sigma_{N,k}< s\}}{\bf1}_{u\le\Upsilon_k(s^-)}\lambda^\ast \tilde{Q}_k(ds,du,d\lambda)\\
&\le{\bf1}_{\{\sigma_{N,k}\le t\}\cap\{\overline{\mathfrak{F}}(\sigma_{N,k}^-)>\overline{\mathfrak{F}}^{N,M}(\sigma_{N,k}^-)\}}\lambda^\ast_{k,B_k(\sigma_{N,k})}\\&\quad+
\int_0^t\int_0^M\int_{\mathbf{D}}{\bf1}_{\{\sigma_{N,k}< s\}}\lambda^\ast \tilde{Q}_k(ds,du,d\lambda)\,,
\end{align*}
where the PRM $\tilde{Q}_k$ is a PRM on $\R^2_+\times\mathbf{D}$ whose projection on $\R^2_+$ coincides
with $Q_k$, and its mean measure is the product of the Lebesgue measure on $\R^2_+$ with the law of 
the random function $\lambda$. Note that we have abused notations by using in the above expression $\lambda$ as the integration variable. 

We first note that $\lambda_{k,B_k(\sigma_{N,k})}$ is the $\lambda_{k,\cdot}$ choosen at the  jump of $B_k$
at time $\sigma_{N,k}$, and it is independent of all the past until that jump time. In particular it is independent
of the event $\{\sigma_{N,k}\le t\}\cap\{\overline{\mathfrak{F}}(\sigma_{N,k}^-)>\overline{\mathfrak{F}}^{N,M}(\sigma_{N,k}^-)\}$.
Hence, exploiting in addition the fact that the process ${\bf1}_{\{\sigma_{N,k}<s\}}$ is predictable,
\begin{align*}
\E\Big[{\bf1}_{\{\sigma_{N,k}\le t\}}&\lambda^\ast_{k,B_k(t)}{\bf1}_{\{B_k(t)>B_k(\sigma_{N,k}^-)\}}\Big]\\
&\le\P(\sigma_{N,k}\le t)\E\left[\lambda^\ast\right]
+\E\left[\lambda^\ast\right]\P(\sigma_{N,k}\le t) M t\\
&\le\E\left[\lambda^\ast\right]\left(1+Mt\right)\P(\sigma_{N,k}\le t)\,.
\end{align*}
Finally, thanks to our truncation, the other term on  the right hand side of \eqref{ineq} is treated exactly as what has just been done. We have proved that there exists a constant $C_T>0$ such that for any $t\in[0,T]$,
\begin{align}\label{estim13}
\E\left[\left|\lambda_{k,B^{N,M}_k(t)}(\A^{N,M}_k(t))-\lambda_{k,B_k(t)}(\A_k(t))\right|\right]
\le C_T\E\left[\sup_{0\le r\le t}|B^{N,M}_k(r)-B_k(r)|\right]\,.
\end{align}
The result follows by combining \eqref{estim10}, \eqref{estim11}, \eqref{estim12}, \eqref{estim13} and Gronwall's Lemma.

\end{proof}

\begin{remark} Clearly, the argument leading to \eqref{estim13} would be much simplified in the case where 
$\lambda^\ast$ is bounded by a constant, which was the assumption in the previous works on this topic.
\end{remark}

\begin{proof}[Proof of Lemma \ref{le:diff}]	
The first (resp. the second) statement is a consequence of \eqref{estim11} (resp. \eqref{estim13}) and Lemma \ref{le:10}.
\end{proof}


\begin{thebibliography}{99}

\bibitem{FPP1} R. Forien, G. Pang, \'E. Pardoux, Epidemic models with varying infectivity, 
{\it SIAM J. Applied Math.}, {\bf81}, pp. 1893-1930, 2021. 
\bibitem{FPP2} R. Forien, G. Pang, \'E. Pardoux, Multi-patch multi-group epidemic model with varying infectivity, {\it Probab. Uncertain. Quant. Risk} {\bf7}, 4, pp. 333-364, 2022.
\bibitem{FPPZ} R. Forien, G. Pang, \'E. Pardoux and A.B. Zotsa-Ngoufack, Stochastic epidemic models with varying infectivity and susceptibility, {\it Annals of Applied Probab.} {\bf35}, 2175--2216, 2025.
\bibitem{Kal} O. Kallenberg, {\it Foundations of Modern Probability}, Springer, 1997.
\bibitem{KK1} W.O. Kermack, A.G. McKendrick, A contribution to the mathematical theory of epidemics, {\it Proc. of the Royal Soc. London. Series A}
{\bf115}, 700-721, 1927.
\bibitem{KK2} W.O. Kermack, A.G. McKendrick, A contribution to the mathematical theory of epidemics II. The problem of endemicity  {\it Proc. of the Royal Soc. London. Series A} {\bf138}, 55-83, 1932.
\bibitem{KK3} W.O. Kermack, A.G. McKendrick, A contribution to the mathematical theory of epidemics III. Further studies of he problem of endemicity  {\it Proc. of the Royal Soc. London. Series A} {\bf141}, 94-122, 1933.
\bibitem{Rao} R.R. Rao The law of large numbers for $D[0,1]$-valued random variables.
{\it Theory of Probability \& its Applications} {\bf8}, 70--74, 1963.

\end{thebibliography}
\end{document}